\documentclass[a4paper]{amsart}
\usepackage{amsmath}
\usepackage{amsfonts}
\usepackage{amssymb}
\usepackage{amsthm} 
\usepackage{graphicx}
\usepackage{color}
\usepackage[top = 2cm, left = 3cm, right = 3cm, bottom = 2cm]{geometry}

\usepackage{hyperref}

\usepackage[backend=bibtex,citestyle =numeric-comp,bibstyle = numeric, maxnames= 12,firstinits=true,abbreviate=true,isbn = false, url = false,doi=false,eprint=false,date=year]{biblatex}
\addbibresource{Self-generated-gradients.bib}
\AtEveryBibitem{%
\clearfield{pages}
\clearfield{month}
\clearfield{month}
\clearfield{note}
\clearfield{number}
\clearlist{language}
}
\renewbibmacro{in:}{}

\newtheorem{definition}{Definition}[section]

\newtheorem{theorem}[definition]{Theorem}

\newtheorem{remark}[definition]{Remark}
\newtheorem{claim}[definition]{Claim}

\numberwithin{equation}{section}

\newcommand{\init}{\mathrm{init}}
\newcommand{\sign}{\mathrm{sign}\,}
\newcommand{\bk}{\mathbf{k}}
\newcommand{\br}{\mathbf{r}}
\newcommand{\bchi}{\boldsymbol\chi}

\title[Mathematical  modeling of self-generated gradients]{Mathematical  modeling of cell  collective motion triggered by  self-generated gradients}

\author[V. Calvez]{Vincent Calvez}
\address{Institut Camille Jordan (ICJ), UMR 5208 CNRS \& Universit\'e Claude Bernard Lyon~1, and Equipe-Projet Inria Dracula, Lyon, France}
\email{vincent.calvez@math.cnrs.fr}

\author[M. Demircigil]{Mete Demircigil}
\address{Institut Camille Jordan (ICJ), UMR 5208 CNRS \& Universit\'e Claude Bernard Lyon~1, and Equipe-Projet Inria Dracula, Lyon, France}
\email{mete.demircigil@univ-lyon1.fr}

\author[R. Sublet]{Roxana Sublet}
\address{Institut Camille Jordan (ICJ), UMR 5208 CNRS \& Universit\'e Claude Bernard Lyon~1, Lyon, France}
\email{roxana.sublet23@gmail.com}

\begin{document}
 
\begin{abstract}
Self-generated gradients have atttracted a lot of attention in the recent biological literature. It is considered as a robust strategy for a group of cells to find its way during a long journey. 
This note is intended to discuss various scenarios for modeling traveling waves of cells that constantly deplete a chemical cue, and so create their own signaling gradient all along the way. We begin with one famous model by Keller and Segel for bacterial chemotaxis. We present the model and the construction of the traveling wave solutions.  We also discuss the limitation of this approach, and review some subsequent work addressing stability issues. Next, we review  two relevant extensions, which are supported by biological experiments. They both admit traveling wave solutions with an explicit value for the wave speed. We conclude by discussing some open problems and perspectives, and  particularly a striking  mechanism of speed determinacy occurring at the back of the wave. All the results presented in this note are illustrated by numerical simulations.  
\end{abstract}

\maketitle

\section{Introduction}

It has been now 50 years that Evelyn F. Keller and Lee A. Segel published their article "{\em Traveling bands of chemotactic bacteria: A theoretical analysis}" \cite{keller_traveling_1971}, which is part of a series of works about the modeling of chemotaxis in bacteria {\em Esherichia coli} and amoebae {\em Dictyostelium discoideum} (shortnamed as Dicty in the following)  \cite{keller_conflict_1970, keller_initiation_1970, keller_model_1971, keller_traveling_1971}. This article described in a simple and elegant way the propagation of chemotactic waves of {\em E. coli} in a one-dimensional space, echoing the remarkable experiments by Adler performed in a capillary tube \cite{adler_chemotaxis_1966}. 

In the present contribution, the seminal ideas of Keller and Segel are discussed from a modern perspective, after half a century of intense activity at the interface of mathematics and biology. Our goal is not to review exhaustively various directions of research in the modeling of  chemotaxis. Our narrow objective consists in setting the focus on the notion of {\em self-generated gradient} (SGG), which has recently shed a new light on several biological processes, both in bacterial collective motion, and in some aspects of developmental biology \cite{tweedy_self-generated_2016-1,tweedy_self-generated_2020}. SGG are at the heart of the model in \cite{keller_traveling_1971}, in which cells create their own signaling gradient by consuming some nutrient, while moving collectively from one side of the domain to the other. There, collective motion results from the averaged biases in the  individual trajectories,  in response to nutrient heterogeneities, a process called chemotaxis.   This concept of SGG can be generalized to any situation where the signal depletion and chemotaxis functions overlap within the same cells \cite{scherber_epithelial_2012,tweedy_self-generated_2016,tweedy_self-generated_2016-1}.

\subsubsection*{SGG in waves of bacteria.}
The work of Keller and Segel has initiated a wealth of studies on bacterial chemotaxis. We refer to the comprehensive review of Tindall et al \cite{tindall_overview_2008}, and also the recent studies \cite{fu_spatial_2018,cremer_chemotaxis_2019} for new biological questions in this topic.  Most of the works discussed in this note consider short time experiments, or experiments at low level of nutrients, neglecting the effect of cell division. This makes a clear distinction between SGG and reaction-diffusion waves, as the celebrated Fisher/Kolmogorov-Petrovsky-Piskunov (F/KPP) equation \cite{fisher_wave_1937,kolmogorov_etude_1937,
aronson_multidimensional_1978}. For this reason, we shall not comment further about the numerous modeling contributions following the patterns reported by Budrene and Berg \cite{budrene_complex_1991,
budrene_dynamics_1995,
brenner_physical_1998} (ring expansion followed by formation of bacteria spots with remarkable symmetries). Chemotaxis has been shown to be crucial in the emergence of such patterns. However, the dynamics of ring expansion are mainly driven by growth and diffusion such as described by F/KPP,  (but see \cite{cremer_chemotaxis_2019} for a recent study where chemotaxis has been shown to enhance range expansion).

There exist many modeling contributions of chemotaxis in bacteria \cite{tindall_overview_2008,hillen_users_2009}, with a particular emphasis on the derivation of macroscopic models from individual rules through kinetic transport equations, see {\em e.g.} \cite{alt_biased_1980,
othmer_models_1988,
othmer_diffusion_2002,chalub_kinetic_2004,
filbet_derivation_2005,
chalub_model_2006,
bellomo_toward_2015}. In contrast, the number of contributions about  mathematical analysis of traveling waves without growth beyond  \cite{keller_traveling_1971}  is relatively scarce.  We refer to \cite{horstmann_constructive_2004}, for an (algebraic) extension of \cite{keller_traveling_1971} with more general chemotaxis functions and uptake rates. We also refer to the series of articles by Z.A. Wang and co-authors, see \cite{wang_mathematics_2012} for a preliminary review and below for further discussion.  

\subsubsection*{SGG in development and cancer.}
In developmental biology, cell movement over long distances is mediated by navigating cues, including chemotactic factors \cite{majumdar_new_2014}. It is commonly postulated that external, pre-patterned gradients, drive cellular migration. One of the key conceptual advantage of SGG is to free the developmental process from the requirement of pre-imposed long-range chemoattractant gradients. In contrast, SGG travel together with the cells, so that they can experience similar environmental conditions (chemical concentration, gradient steepness) all over the journey. This is thought to provide robustness to the developmental system \cite{tweedy_self-generated_2016,tweedy_self-generated_2020}. 

Recently, SGG have been shown to occur during embryogenesis, and in particular during the initiation of the posterior lateral line in zebrafish \cite{dona_directional_2013, venkiteswaran_generation_2013}. More precisely, migrating cell cohorts (consisting of approximately a hundred of cells) can generate and sustain gradients of chemoattractants across their length.  
This experimental work is of great importance as being the first proof of the occurrence of SGG {\em in vivo}. 

Self-generated gradients are also under investigation during cancer invasion and metastasis. This includes modeling {\em in silico} (see \cite{sfakianakis_hybrid_2020} and  references therein), and experiments with cell cultures {\em in vitro} \cite{muinonen-martin_melanoma_2014}. In particular, we highlight the work of \cite{scherber_epithelial_2012}, in which an astonishing self-guidance strategy in cancer epithelial cell populations was unravelled. In fact, cells were put in microfluidic mazes, without any pre-existing external gradients. Most of them could find their way  out of the mazes by generating their own navigating cues. Experimental studies with increasingly complex mazes were also performed with Dicty cells, with quite remarkable outcomes \cite{tweedy_seeing_2020}.

%
%

\subsubsection*{Plan and purpose of the paper.} In Section \ref{sec:KS-construct} we recall the basic construction of traveling waves in the seminal article \cite{keller_traveling_1971}. The lack of positivity of the chemical concentration is illustrated by some numerical simulations (Section \ref{sec:posistab}). The issue of instability is also reviewed. Section \ref{sec:variations} briefly presents some possible variations of the original article from the literature. It is one of the main goal of the present contribution to discuss in details two possible extensions which are biologically relevant (that is, supported by experiments). Section \ref{sec:scenario 1} contains an overview of past work where another attractant signal is added to prevent cell dispersion during propagation. This results in competing cell fluxes, with stronger advection at the back of the wave than at the edge. Section \ref{sec:scenario 2} reports on a piece of recent work including signal-dependent phenotypical switch (division/ migration). This results in a wave sustained by cell division restricted to the edge. 

All mathematical results proven here are simple, namely involving explicit construction of one-dimensional traveling waves (whose respective stabilities are supported by numerical simulations of the Cauchy problems). The last construction is original, up to our knowledge, see Theorem \ref{th:roxana}. It could be of interest for experts in reaction-diffusion equations, as it exhibits a possibly new phenomenon of selection of the minimal speed at the back of the wave.  

\subsubsection*{Acknowledgement.} Part of this work has been achieved during the third author's master internship at Institut Camille Jordan. The two first authors are indebted to Christophe Anjard, Olivier Cochet-Escartin and Jean-Paul Rieu for having drawn their attention to SGG beyond the case of bacterial waves. The authors are very grateful to Eitan Tadmor for his constant encouragement to write this contribution, and to a pair of anonymous reviewers for their feedbacks. reviewers This project has received financial support from the CNRS through the MITI interdisciplinary programs. This project has received funding from the European Research Council (ERC) under the European Union’s Horizon 2020 research and innovation program (grant agreement No 865711).

\section{The Keller-Segel model and variations}

\subsection{The construction of waves by Keller and Segel}
\label{sec:KS-construct}

In this section, we recall briefly the model and analysis in \cite{keller_traveling_1971}. The cell density (bacteria) is denoted by $\rho(t,x)$, for time $t>0$, and position along the channel axis $x\in \mathbb{R}$, whereas the concentration of the signaling molecule is denoted by $S(t,x)$. 
\begin{equation}\label{eq:KS}
\begin{cases}
\dfrac{\partial \rho}{\partial t} +  \dfrac{\partial }{\partial x}\left ( - d \dfrac{\partial \rho}{\partial x} + \chi \rho \dfrac{\partial \log S}{\partial x}   \right )  =  0\,,
\medskip\\
\dfrac{\partial S}{\partial t} = D \dfrac{\partial^2 S}{\partial x^2} - k \rho\,.
\end{cases}
\end{equation}
The equation on $\rho$ combines unbiased (diffusive) motion with directed motion in response to the logarithmic signaling gradient (see below for further discussion about this specific choice), with intensity $\chi>0$. 

On the one hand, the equation on $\rho$ is conservative, and the total mass of cells in the channel, which is an invariant of the system, is denoted by $M$, so that $M = \int_\mathbb{R} \rho(0,z)\, dz = \int_\mathbb{R} \rho(t,z)\, dz$. On the other hand, the chemical concentration decays globally in time, and the limiting value at $\infty$ is denoted by $S_\init$, which can be viewed as the  initial, homogeneous, concentration in the channel associated with the Cauchy problem. 

Noticeably, the consumption term in the equation on $S$, namely $-k\rho$, does not involve $S$ itself, precluding any guarantee about the positivity of $S$ in the long time. Nevertheless, the existence of positive traveling wave solutions $\rho(x-ct)$, $S(x-ct)$ was established in \cite{keller_traveling_1971} by means of explicit computations, in the absence of signal diffusion $D=0$ (for mathematical purposes), and with the condition $\chi>d$. The wave under interest has the following structure: $\rho \in L_+^1(\mathbb{R})$, with $\lim_{z\to\pm\infty}\rho(z) = 0$, and $S \in L_+^\infty(\mathbb{R})$ is increasing with $\lim_{z\to-\infty}S(z) = 0$, and $\lim_{z\to+\infty}S(z) = S_\init$, the reference value of the chemical concentration.  

\begin{theorem}[Keller and Segel \cite{keller_traveling_1971}]
Assume $D = 0$, and $\chi>d$. Then, there exist a speed $c>0$ (depending on $M$, $k$ and $S_\init$, but not on $\chi$ nor on $d$), and a stationary solution of \eqref{eq:KS} in the moving frame $(\rho(x-ct), S(x-ct))$, such that $\rho$ is positive and integrable, $\int_\mathbb{R} \rho(z)\, dz = M$, and $S$ is increasing between the following limiting values
\begin{equation*}
\begin{cases}
\lim_{z\to-\infty}S(z) = 0\,, \quad\\
\lim_{z\to+\infty}S(z) = S_\init\,.
\end{cases}
\end{equation*}
\label{th:KS}
\end{theorem}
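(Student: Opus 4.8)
The plan is to look for traveling wave solutions by substituting the ansatz $\rho(t,x) = \rho(z)$, $S(t,x) = S(z)$ with $z = x - ct$ into the system \eqref{eq:KS} with $D = 0$, reducing the PDE system to a system of ODEs in the single variable $z$. The first equation becomes $-c\rho' + (-d\rho' + \chi \rho (\log S)')' = 0$, which integrates once (using the decay $\rho, \rho' \to 0$ at $\pm\infty$) to $-c\rho - d\rho' + \chi \rho (\log S)' = 0$. The second equation becomes $-cS' = -k\rho$, i.e.\ $\rho = (c/k) S'$. The strategy is then to substitute this expression for $\rho$ into the first-order relation to obtain a closed scalar ODE for $S$ alone.

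First I would carry out this substitution carefully. From $\rho = (c/k)S'$ we get $\rho' = (c/k)S''$, and plugging in gives $-c(c/k)S' - d(c/k)S'' + \chi (c/k) S' \cdot (S'/S) = 0$. Dividing by $(c/k)S'$ (valid where $S' > 0$, which we will check is the case) yields $-c - d\,S''/S' + \chi S'/S = 0$, equivalently $d\,\dfrac{S''}{S'} = \chi\,\dfrac{S'}{S} - c$. The left-hand side is $d\,(\log S')'$ and the right-hand side is $(\chi \log S - c z)'$, so integrating once more gives $d \log S' = \chi \log S - cz + \text{const}$, i.e.\ $S' = A\, S^{\chi/d}\, e^{-cz/d}$ for some constant $A > 0$. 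This is a separable Bernoulli-type ODE: writing $S^{-\chi/d} S' = A e^{-cz/d}$ and integrating (here the hypothesis $\chi > d$ is what makes $1 - \chi/d < 0$, so that $S^{1-\chi/d} \to \infty$ as $S \to 0^+$, allowing the left limit $S(-\infty) = 0$ to be attained at a finite level of the antiderivative) produces an explicit formula for $S^{1 - \chi/d}$, hence for $S(z)$, as an explicit function of $z$ involving the integration constants, $c$, $d$, $\chi$.

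Next I would impose the boundary conditions to pin down the constants and, crucially, the speed $c$. The condition $S(+\infty) = S_\init$ fixes one constant; the condition $S(-\infty) = 0$ together with monotonicity fixes the form; and the mass constraint $\int_{\mathbb{R}} \rho\,dz = M$ becomes, via $\rho = (c/k)S'$, simply $(c/k)\bigl(S(+\infty) - S(-\infty)\bigr) = (c/k) S_\init = M$, which yields $c = kM/S_\init$. This is the promised formula showing $c > 0$ depends on $M$, $k$, $S_\init$ but not on $\chi$ or $d$. I would then verify a posteriori that the explicit $S$ is indeed smooth, strictly increasing, with $S > 0$ everywhere and the correct limits, and that $\rho = (c/k)S'$ is positive, integrable with the right mass, and decays to $0$ at $\pm\infty$ (exponential decay at $+\infty$ from the $e^{-cz/d}$ factor competing with $S \to S_\init$; algebraic-type decay at $-\infty$ coming from the $S^{\chi/d}$ factor as $S \to 0$).

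The main obstacle, modest as it is, is the bookkeeping in the second integration: one must be attentive to the sign of $1 - \chi/d$ and to the behavior of the antiderivative of $S^{-\chi/d}S'$ near $S = 0$ and near $S = S_\init$, since this is exactly where the boundary conditions get encoded and where the hypothesis $\chi > d$ becomes indispensable (if $\chi \le d$ the profile cannot reach $0$ on the left, or fails to be integrable). A secondary point to handle cleanly is justifying the division by $S'$ — i.e.\ arguing that on the traveling-wave profile $S' > 0$ throughout, so that $\rho > 0$ throughout — which follows from the explicit formula $S' = A S^{\chi/d} e^{-cz/d}$ once $A > 0$ is established, but should be stated rather than assumed. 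Everything else is an explicit, elementary integration.
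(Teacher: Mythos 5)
Your proposal is correct and takes essentially the same route as the paper: you integrate the $\rho$-equation once to the zero-flux relation, use the $D=0$ equation $-cS' = -k\rho$ to close a separable ODE for $S$ (your $S' = A\,S^{\chi/d}e^{-cz/d}$ is exactly the paper's $c\,S' = ka\,S^{\chi/d}e^{-cz/d}$, with the hypothesis $\chi>d$ entering in the same place to allow $S(-\infty)=0$), and obtain $c = kM/S_\init$ from the mass constraint, which is the paper's a priori integration of the $S$-equation over the line. The only cosmetic differences are the order of substitution (the paper solves for $\rho$ in terms of $S$ from the flux relation first) and your parenthetical description of ``algebraic-type decay at $-\infty$'', which is imprecise --- the explicit profile shows $S$, hence $\rho$, decays exponentially in $z$ there --- but this does not affect the argument.
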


Before we recall briefly the construction of the wave solution, let us comment on the value of the wave speed $c$, that can be directly obtained from the second equation in \eqref{eq:KS-S}, whatever the value of $D\geq 0$ is. Indeed,  the equation in the moving frame reads
\begin{equation*}
- c \dfrac{d S}{dz} = D \dfrac{d^2 S}{dz^2} - k \rho \,.
\end{equation*}
By integrating this equation over the line, and using the extremal conditions at $\pm \infty$ (that can be verified {\em a posteriori}), we find
\begin{equation}\label{eq:speed KS}
c S_\init = k \int_\mathbb{R} \rho(z)\, dz  = k M\,.
\end{equation}
Strikingly, the wave speed depends only on the dynamics of establishment of the gradient. In particular, it does not depend on the intensity of the chemotactic response $\chi$. This is in contrast with several conclusions to be drawn from alternative models in the sequel (see Sections \ref{sec:scenario 1} and \ref{sec:scenario 2}).  

\begin{proof}
The speed $c$ is given {\em a priori} by the relationship \eqref{eq:speed KS}.

The first step in the construction of traveling wave solutions  is the zero-flux condition in the moving frame $z = x-ct$, namely
\begin{align*}
- c \rho  - d \dfrac{d \rho}{d z} + \chi \rho \dfrac{d \log S}{d z} = 0 \quad& \Leftrightarrow \quad \dfrac{d \log \rho}{d z} = - \frac{c}{d} + \frac{\chi}{d} \dfrac{d \log S}{d z}\medskip\\
& \Leftrightarrow \quad \rho(z) = a \exp\left ( - \frac{c}{d} z + \frac{\chi}{d}  \log S \right )\,,
\end{align*}
where $a$ is a (positive) constant of integration. 
The second step consists in solving the following ODE (assuming $D=0$) 
\begin{equation*}
c \dfrac{d S}{dz} =  k a \exp\left ( - \frac{c}{d} z + \frac{\chi}{d}  \log S \right ) \quad \Leftrightarrow\quad 
\left(1 - \frac{\chi}{d}\right )^{-1} \left ( S_\init^{1 - \frac{\chi}{d}} - S(z)^{1 - \frac{\chi}{d}} \right ) = \frac{kad}{c^2} \exp\left ( - \frac c d z \right )\,. 
\end{equation*}
By re-arranging the terms, we obtain
\begin{equation*}
\left ( \dfrac{S(z)}{S_\init} \right )^{1 - \frac{\chi}{d}} = 1 + \left( \frac{\chi}{d} - 1 \right )\left ( \frac{kad}{c^2} S_\init^{\frac{\chi}{d}-1} \right ) \exp\left ( - \frac c d z \right )\,. 
\end{equation*}
Suppose that $\chi<d$, then the right-hand-side goes to $-\infty$ as $z\to -\infty$ which is a contradiction. Hence, the calculations make sense only if $\chi>d$.  
By translational invariance, the constant $a$ can be chosen so as to cancel the prefactor in the right-hand-side (provided $\chi>d$), yielding the simple expression
\begin{equation}\label{eq:KS-rho}
 \dfrac{S(z)}{S_\init}  = \left (  1 +  \exp\left ( - \frac c d z \right )\right )^ {\frac{d}{d - \chi}}\,.
\end{equation} 
The corresponding density profile is: 
\begin{equation}\label{eq:KS-S}
\rho(z) = a' \exp\left ( - \frac{c}{d} z\right ) \left (  1 +  \exp\left ( - \frac c d z \right )\right )^ {\frac{\chi}{d - \chi}}\,,
\end{equation}
for some constant $a'$, that can be determined explicitly through the conservation of mass. 
\end{proof}

\subsection{Positivity and stability issues}
\label{sec:posistab}

\begin{figure}
\begin{center}
\includegraphics[width=.66\linewidth]{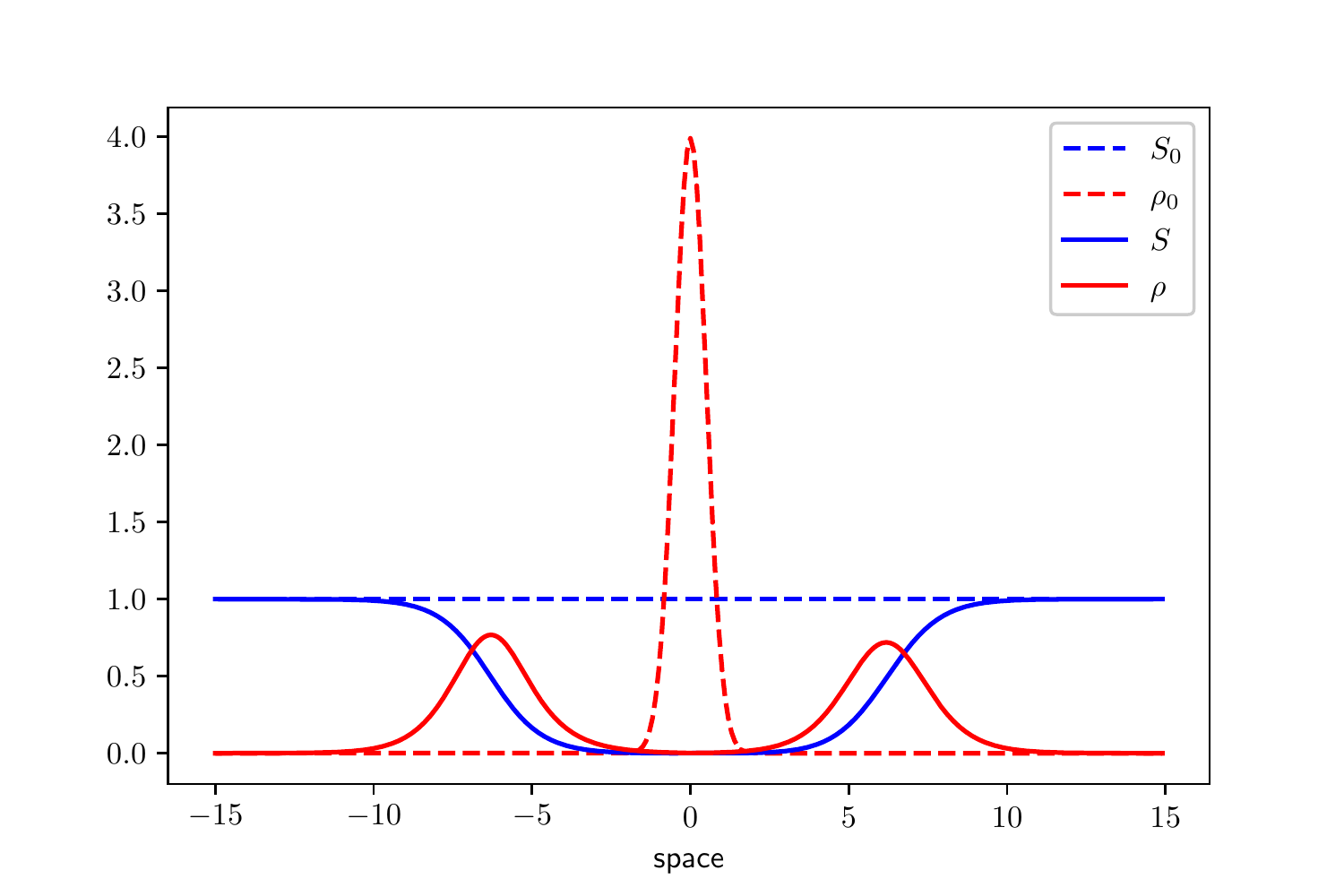}(a)\\
\includegraphics[width=.66\linewidth]{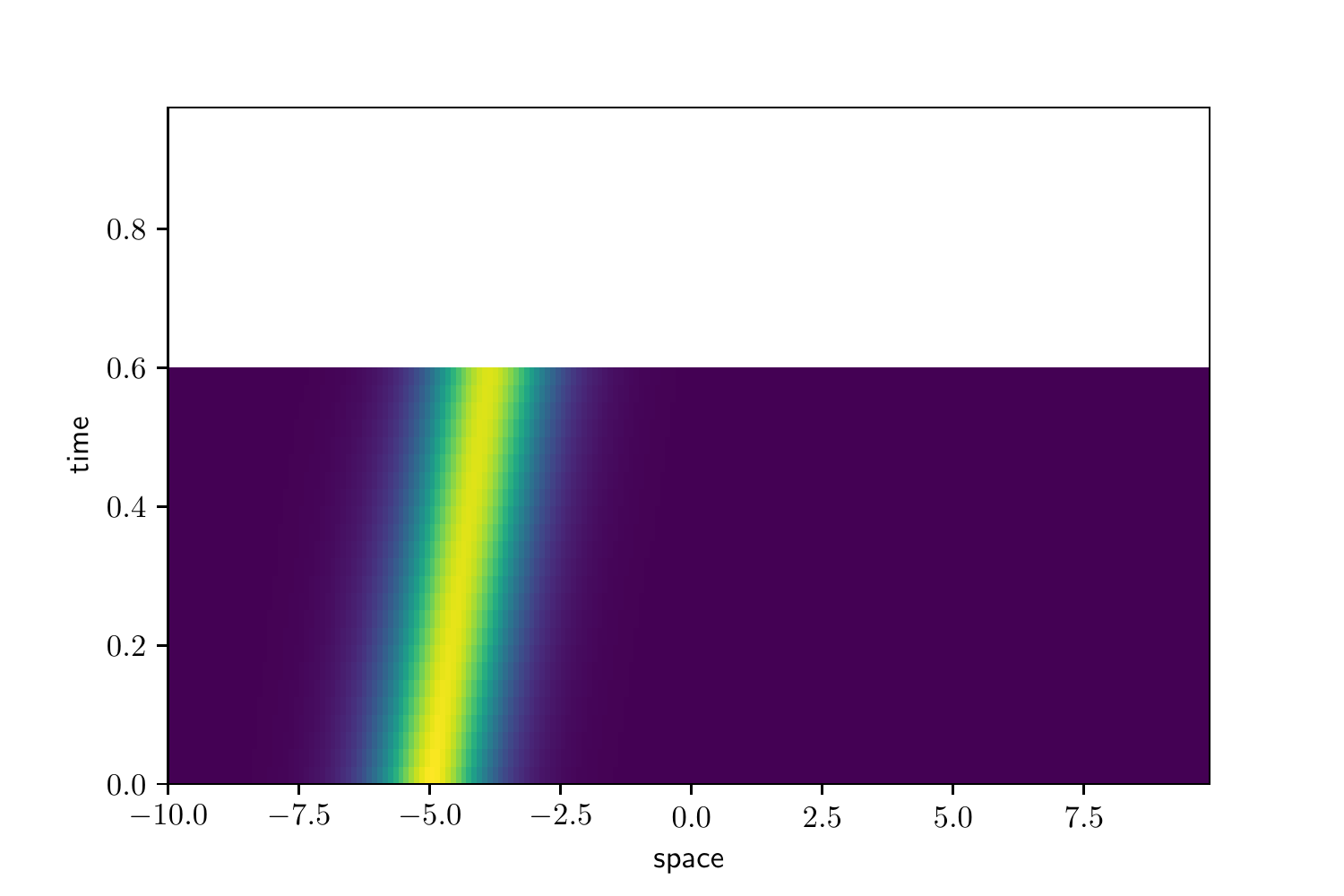}(b)\\
\includegraphics[width=.66\linewidth]{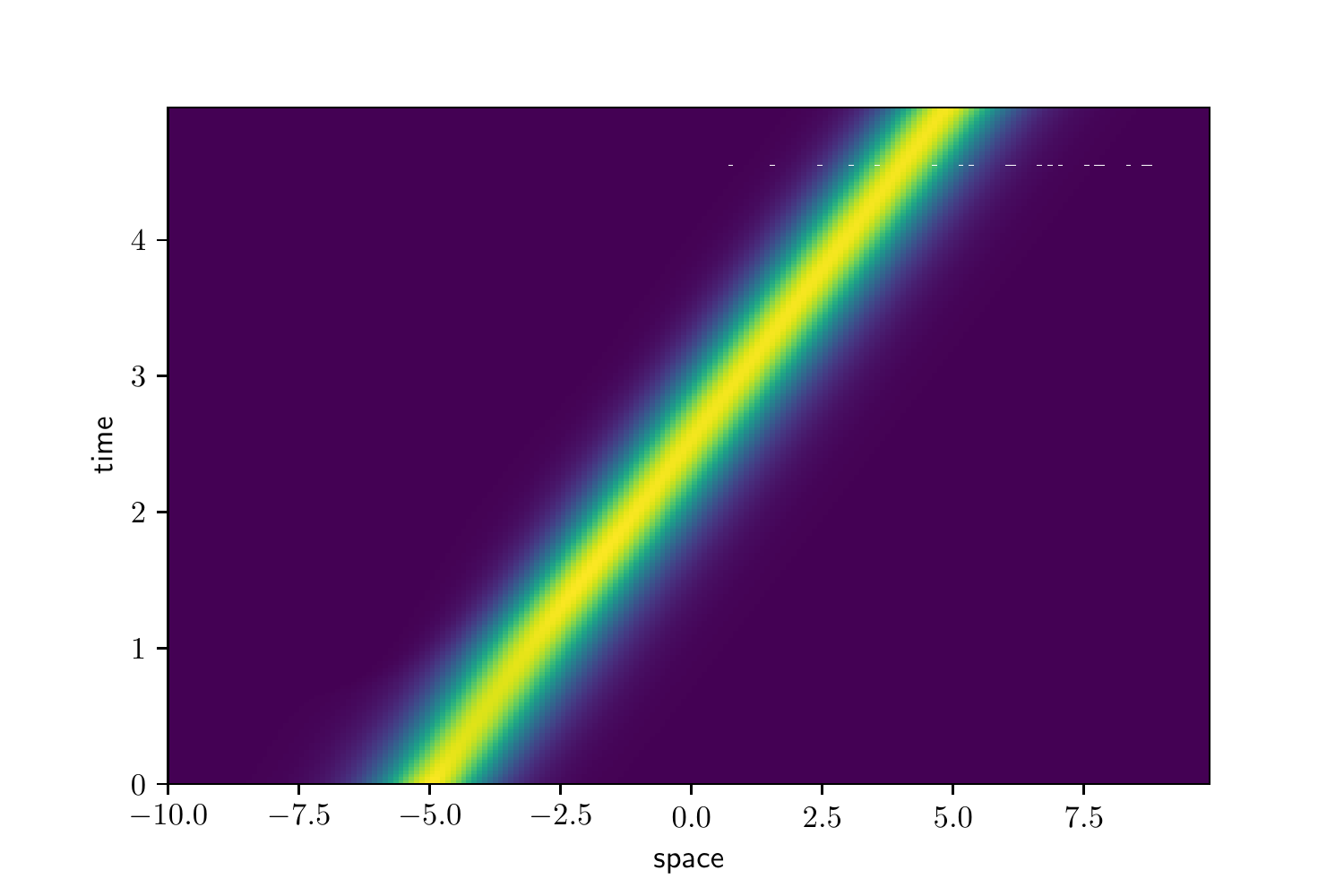}(c)
\caption{Positivity and stability issues in the numerical simulations of \eqref{eq:KS}. (a) Starting from a generic initial data, the numerical scheme quickly breaks down because the signal becomes negative at some point. The initial condition is shown in dashed line, and the final state in plain line (last time before numerical breakdown). (b) Aligning the initial data on the exact density and signal profiles $(\rho(z), S(z))$ \eqref{eq:KS-rho}--\eqref{eq:KS-S}, yields the same conclusion. The cell density is shown in space/time. The numerical breakdown occurs at approximately $t = 0.6$. (c) The propagation of the wave can be rescued by setting manually $S_{n+1} = \max(S_n,1E-12)$ after each time step, as in  \cite{hilpert_lattice-boltzmann_2005}. For all the figures, the parameters are $(d=1,\chi=2,D=0,k=1)$.\label{fig:KS}}
\end{center}
\end{figure}

Despite its elegance, the previous construction suffers from two drawbacks. First of all, the positivity of the  signal concentration $S$ is not guaranteed in the Cauchy problem. Actually, numerical solutions soon break down because of this positivity issue. This occurs starting from a generic initial data (Figure \ref{fig:KS}a), and even from the traveling wave solution $(\rho(z), S(z))$ given by the expressions \eqref{eq:KS-rho}--\eqref{eq:KS-S}, after accumulation of numerical errors (Figure \ref{fig:KS}b). Nevertheless, the positivity can be manually rescued by setting $S_{n+1} = \max(S_n,\epsilon)$ for some arbitrary threshold $\epsilon\ll 1$, as suggested in \cite{hilpert_lattice-boltzmann_2005}. In that case, the wave seems to propagate in a stable way in the long term, see Figure \ref{fig:KS}c.  

Second, and somewhat related, is the problem of stability of the wave constructed in Section \ref{sec:KS-construct}. Linear stability was addressed first in \cite{rosen_stability_1975}, where it was proven that the spectral problem admits  no {\em real} positive eigenvalue. However, the linearized problem is not self-adjoint, so that this preliminary result is largely incomplete from the perspective of stability. Few years later, it was proven in \cite{nagai_traveling_1991} that the (essential) spectrum of the linear operator  intersects the right half-plane, meaning that the wave is linearly unstable. The authors proved a refined instability result, when perturbations are restricted to a class of exponentially decreasing functions. Noticeably, their results cover both $D=0$ and $D>0$. This analysis has been largely extended in \cite{davis_absolute_2017,davis_spectral_2019}
where it was proven that the wave is either transiently (convectively) unstable, that is, the spectrum is shifted   in the open left half plane in a two-sided exponentially weighted function space \cite{sandstede_chapter_2002}, when $\chi>d$ is not too large, but it is absolutely unstable when $\chi$ is above some threshold, that is, $\frac\chi d>\beta^0_\mathrm{crit}(D)$, where, {\em e.g.} $\beta^0_\mathrm{crit}(0)$ is the unique real root above one of an explicit $10^\mathrm{th}$ order polynomial, see \cite[Theorem 2.1]{davis_spectral_2019}.

\begin{figure}
\begin{center}
\includegraphics[width=.66\linewidth]{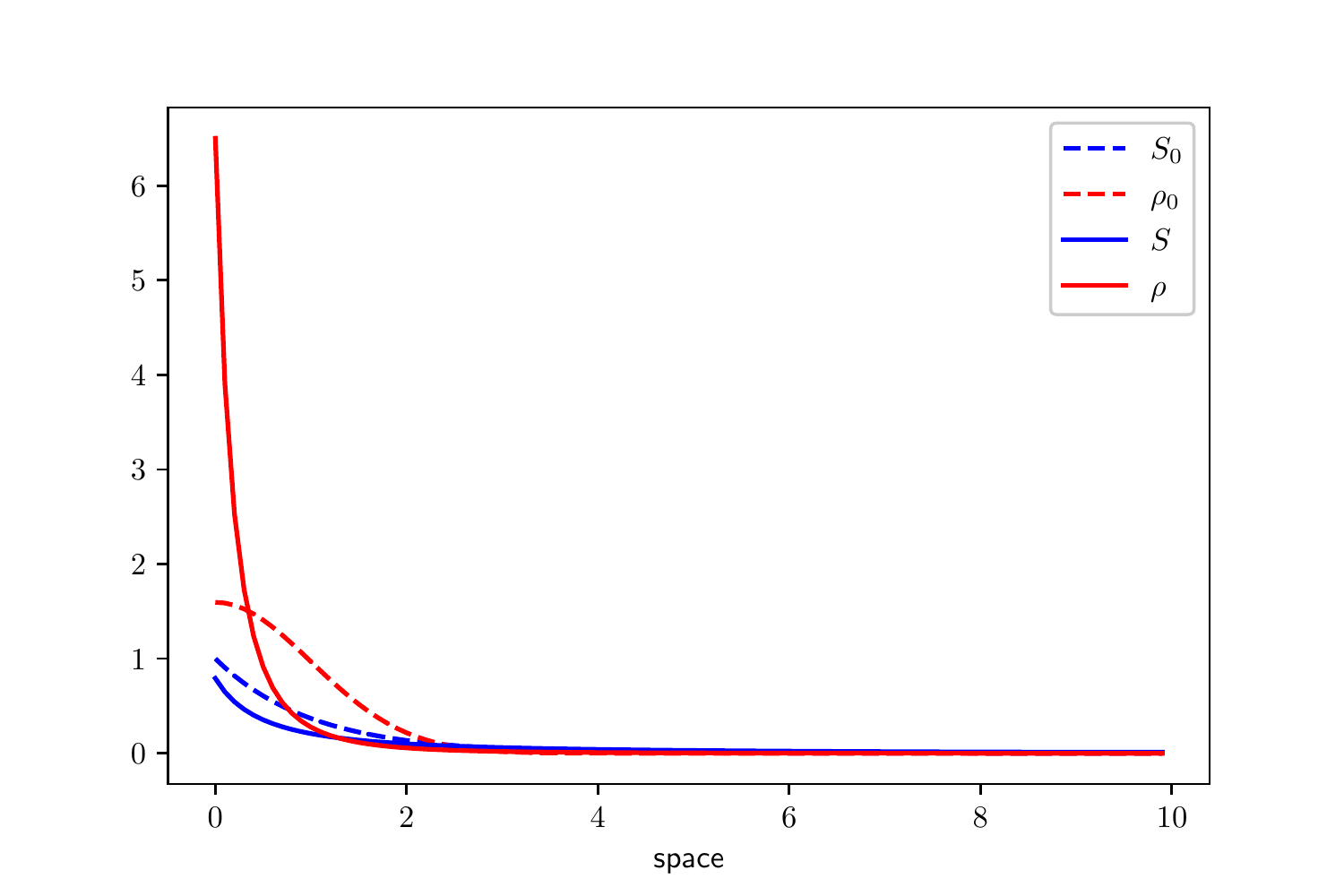}(a)\\
\includegraphics[width=.66\linewidth]{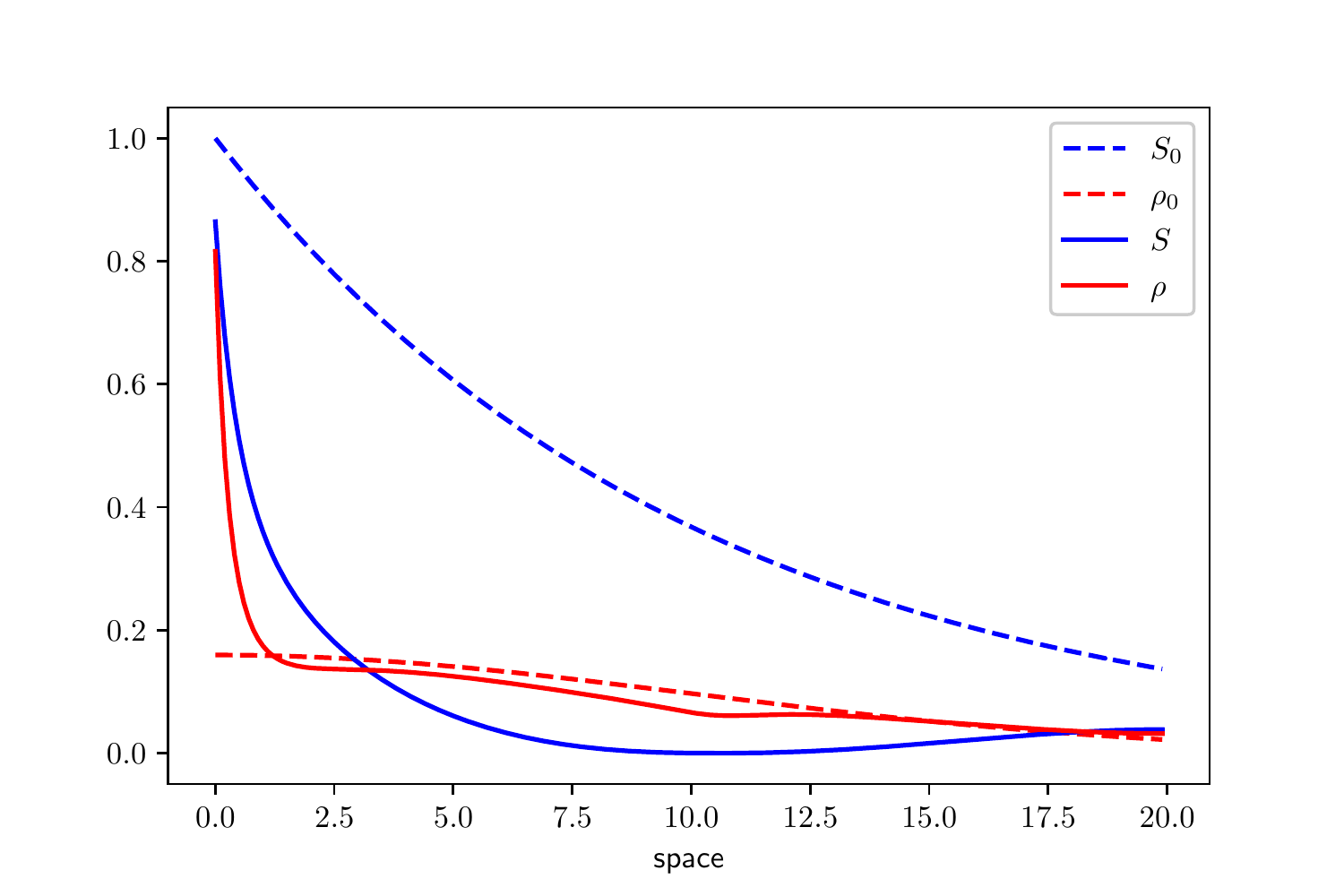}(b)
\caption{Numerical solutions of \eqref{eq:KS} with respectively Neumann boundary condition for $\rho$, and positive Dirichlet boundary condition for $S$  at the origin. (a) Local stability, as established in \cite{carrillo_boundary_2021} is illustrated numerically, for an initial condition chosen near the stationary state, and a relatively large diffusion of the chemical  ($d=1,\chi = 2,D=1,k=1$). (b) Nonetheless, the numerical solution may become nonpositive when the initial condition is far from the stationary state, and  diffusion of the chemical is not too large ($d=1,\chi = 2,D=0.25,k=1$). For each figure, the initial condition is shown in dashed line, and the final state in plain line (last time before numerical breakdown in (b)).  \label{fig:spike}}
\end{center}
\end{figure}

Recently, it has been established the existence and nonlinear  stability of stationary solutions for the problem \eqref{eq:KS} set on a half-line $\{x>0\}$, with respectively Neumann boundary condition for $\rho$, and positive Dirichlet boundary condition for $S$  at the origin  \cite{carrillo_boundary_2021}. The motivation comes from the study of spike solutions stabilized by a sustained amount of chemical concentration at the boundary.  The stability result in \cite{carrillo_boundary_2021} imposes quite stringent conditions on the decay of the initial data at $+\infty$. Nevertheless, local stability of the stationary spike does not preclude loss of positivity in the numerics when initiating the Cauchy problem with initial conditions far from equilibrium, see Figure \ref{fig:spike}.

\begin{remark} Many of the references mentioned above also discuss and analyze the case of a degenerate consumption rate  $\frac{\partial S}{\partial t} = D\frac{\partial^2 S}{\partial x^2} - k \rho S^m$  ($m<1$), without changing much of the global picture.

The case $m=1$ differs significantly, however. It can be viewed directly on the case $D = 0$ that the logarithmic gradient of the putative wave in the moving frame, that is, $\frac{d \log S}{d z}$ cannot have a positive limit as $z\to -\infty$, simply because it satisfies the relationship $-c \frac{d \log S}{d z} = -k\rho$, the latter being integrable. Consequently, advection cannot balance diffusion  at $-\infty$, preventing the existence of a traveling wave.  The same conclusion holds in the case $D>0$, for which $u = \frac{d \log S}{d z}$ is a homoclinic orbit of the following first-order equation
\[
\frac{du}{dz} = -\frac{c}{D} u - u^2  + \frac kD \rho
\,,
\]
that leaves the origin $u = 0$ at $z= -\infty$, and gets back to the origin
$u = 0$ at $z=+\infty$, see \cite[Proposition 6.3]{calvez_chemotactic_2019}.
\label{rem:1}
\end{remark}

\subsection{Variations on the Keller-Segel model}
\label{sec:variations}

As mentioned above, the seminal work \cite{keller_traveling_1971} gave rise to a wealth of modeling and analysis of traveling bands of bacteria. Many extensions were proposed soon after Keller and Segel's original paper, with various sensitivity functions (other than the logarithmic sensitivity), and various consumption rates. The models have the following general form,
\begin{equation}
\begin{cases}
\dfrac{\partial \rho}{\partial t} +  \dfrac{\partial }{\partial x}\left ( - d  \dfrac{\partial \rho}{\partial x} + \rho \bchi\left (S,\dfrac{\partial S}{\partial x}\right )    \right )  =  0\,,
\medskip\\
\dfrac{\partial S}{\partial t} = D \dfrac{\partial^2 S}{\partial x^2} - \bk(S,\rho)\,.
\end{cases}\label{eq:extended KS}
\end{equation}
where the chemotactic sensitivity $\bchi$ can be a function of both the signal concentration  and its gradient (as well as the diffusion coefficient $d$ -- dependency not reported here for the sake of clarity).
These variations were nicely reviewed by Tindall et al \cite{tindall_overview_2008}, and we are not going to comment them, but the contribution of  Rivero et al \cite{rivero_transport_1989}. The latter follows the approach of Stroock  \cite{stroock_stochastic_1974}, and Alt \cite{alt_biased_1980}. These approaches make the connection between  the individual response of bacteria to space-time environmental heterogeneities, and the macroscopic flux, hence making sense of the aforementioned averaging, by means of individual biases in the trajectories (see {\em e.g.} \cite{othmer_models_1988,
othmer_diffusion_2002,chalub_kinetic_2004,
filbet_derivation_2005,
chalub_model_2006,
bellomo_toward_2015}, and more specifically \cite{erban_individual_2004,
saragosti_mathematical_2010,si_pathway-based_2014,xue_macroscopic_2015} for bacterial populations). Interestingly, Rivero et al postulate a chemotactic advection speed $\bchi$ which is non-linear with respect to the chemical gradient at the macroscopic scale, namely
\begin{equation}\label{eq:rivero}
\bchi\left (S,\dfrac{\partial S}{\partial x}\right )  = \chi \tanh \left ( f(S) \dfrac{\partial S}{\partial x} \right ) \,,
\end{equation}  
where $f$ is a decreasing function containing the details of signal integration by a single cell. 

Up to our knowledge, none of the models in the long list of existing variations could exhibit traveling waves while preserving positivity of $S$ and keeping the total mass $\int_\mathbb{R} \rho$ constant (that is, ignoring growth). The minimal requirement for ensuring positivity would essentially be that the uptake function $\bk(S,\rho)$ is dominated by $S$ at small concentration, typically: $\limsup_{S\to 0} \frac{\bk(S,\rho)}{S}<\infty$. However, this intuitively leads to a shallow (logarithmic) gradient at the back of the wave, unable to guarantee the effective migration of cells left behind, see Remark \ref{rem:1}. Cell leakage has long been identified in the biological literature, but not considered as a major issue, see for instance a discussion  in \cite{fu_spatial_2018}, and also the addition of a linear growth term in \cite{saragosti_directional_2011} so that the loss of cells at the back is qualitatively compensated by cell division (for a realistic value of the division rate).

It is interesting to discuss the natural choice $-\bk(S,\rho) = -k S \rho$ (combined with logarithmic sensivity), which has been widely studied using tools from hyperbolic equations (after performing the Hopf-Cole transformation) by Z.A. Wang and co-authors, see the review \cite{wang_mathematics_2012}, and further stability results in \cite{jin_asymptotic_2013,li_stability_2014}.
The issue of shallow gradients is overcome by the boundary conditions at infinity, $\rho$ being uniformly positive at least on one side. Clearly, the traveling wave solutions are not integrable. This hints to the conflict of conservation of mass and chemical positivity which seem not concilable. 

This leakage effect is a major mathematical issue, because most of the analytical studies build upon the existence of a wave speed and a wave profile which is stationary in the moving frame.

\subsection{Beyond the Keller-Segel model: two scenarios for SGG}
\label{sec:beyond}

In the next two sections, we discuss two relevant modeling extensions, motivated by biological experiments, for which traveling waves exist and are expected to be stable. In the first scenario, cell leakage is circumvented by enhanced advection at the back of the wave, with an asymptotic constant value of the transport speed at $-\infty$. In the second scenario, cell leakage occurs, but it is naturally compensated by growth at the edge of the propagating front. 

For each scenario, we discuss briefly the biological motivations. Then we present the explicit construction of the traveling wave solutions, together with the formula for the wave speed. When possible, we discuss the connections with some other works in the literature.

\section{Scenario 1: strongest advection at the back}
\label{sec:scenario 1}

\begin{figure}
\includegraphics[width=\linewidth]{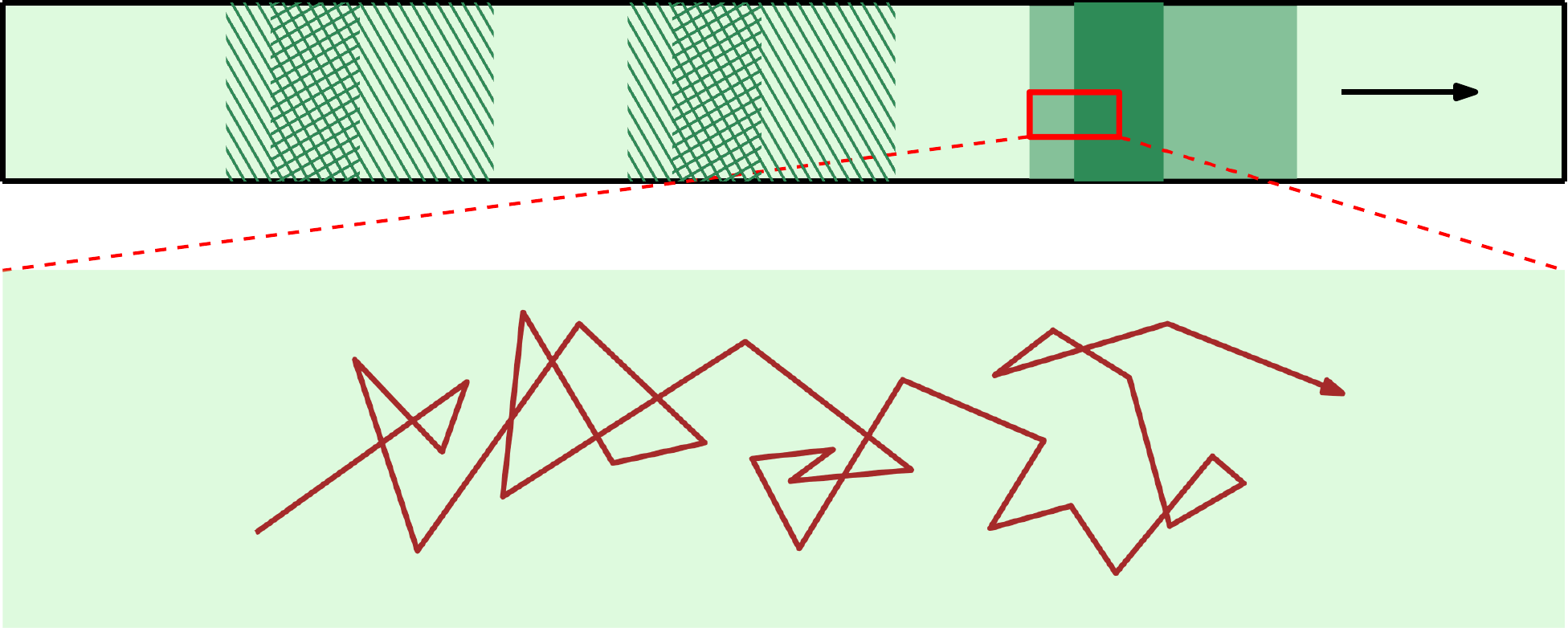}
\caption{Cartoon of the experiments performed in \cite{saragosti_mathematical_2010} and \cite{saragosti_directional_2011}. A band of bacteria is traveling from left to right in a microfluidic channel. Videomicroscopy allows tracking individual trajectories inside the wave, revealing heterogeneous behaviors: biases are stronger at the back of the wave than at the edge. 
\label{fig:band}}
\end{figure}

\begin{figure}
\begin{center}
\includegraphics[width = 0.8\linewidth]{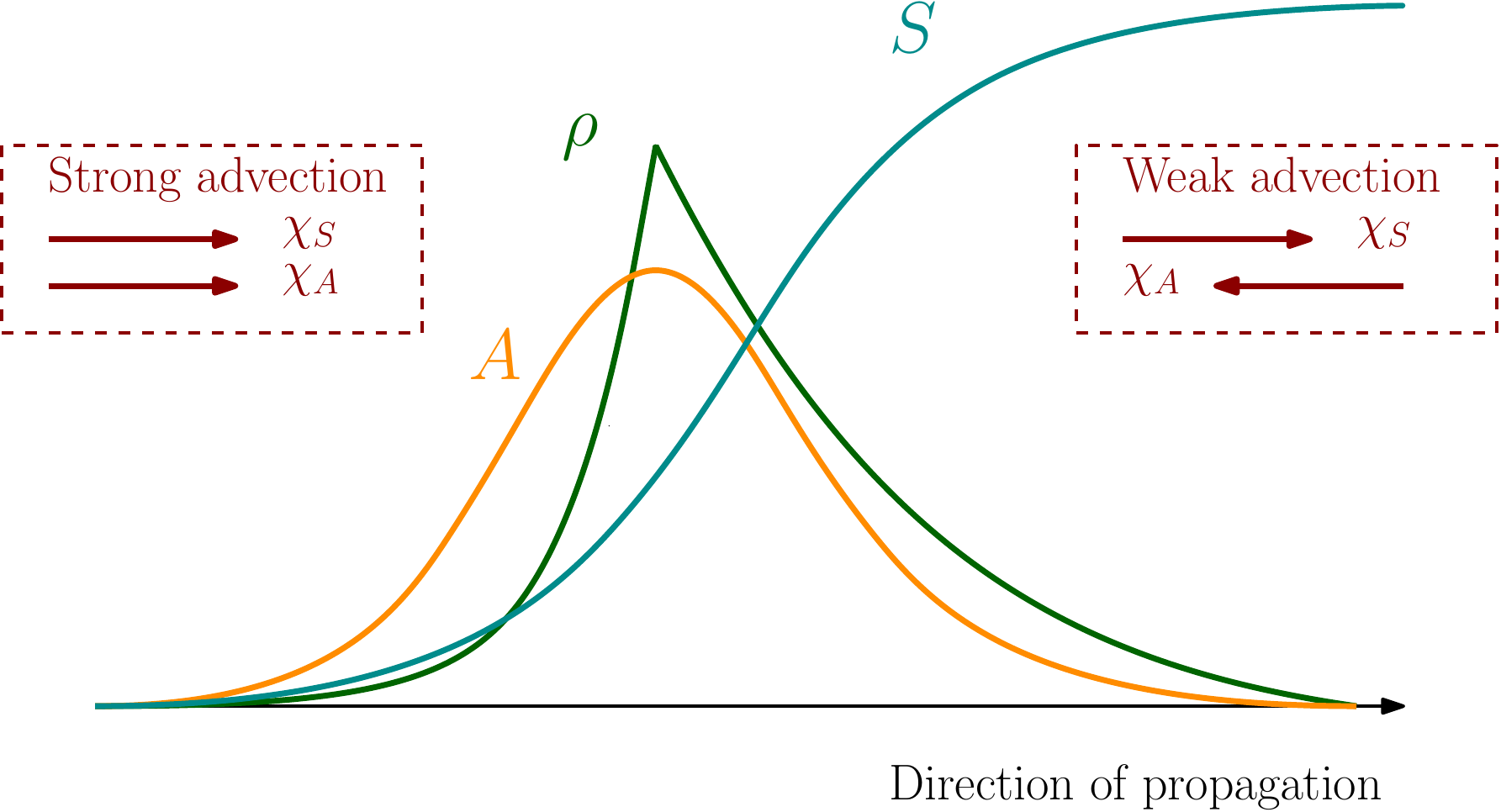}
\caption{Sketch of the chemical environment viewed by the cell density in model \eqref{eq:saragosti}. It is characterized by stronger advection at the back (the two signals have the same orientation), than at the edge (the two gradients have opposite orientations). When chemotactic speeds coincide ($\chi_S=\chi_A$), then we simply have diffusion on the right side of the peak.}\label{fig:gradients}
\end{center}
\end{figure}

\begin{figure}
\begin{center}
\includegraphics[width = 0.8\linewidth]{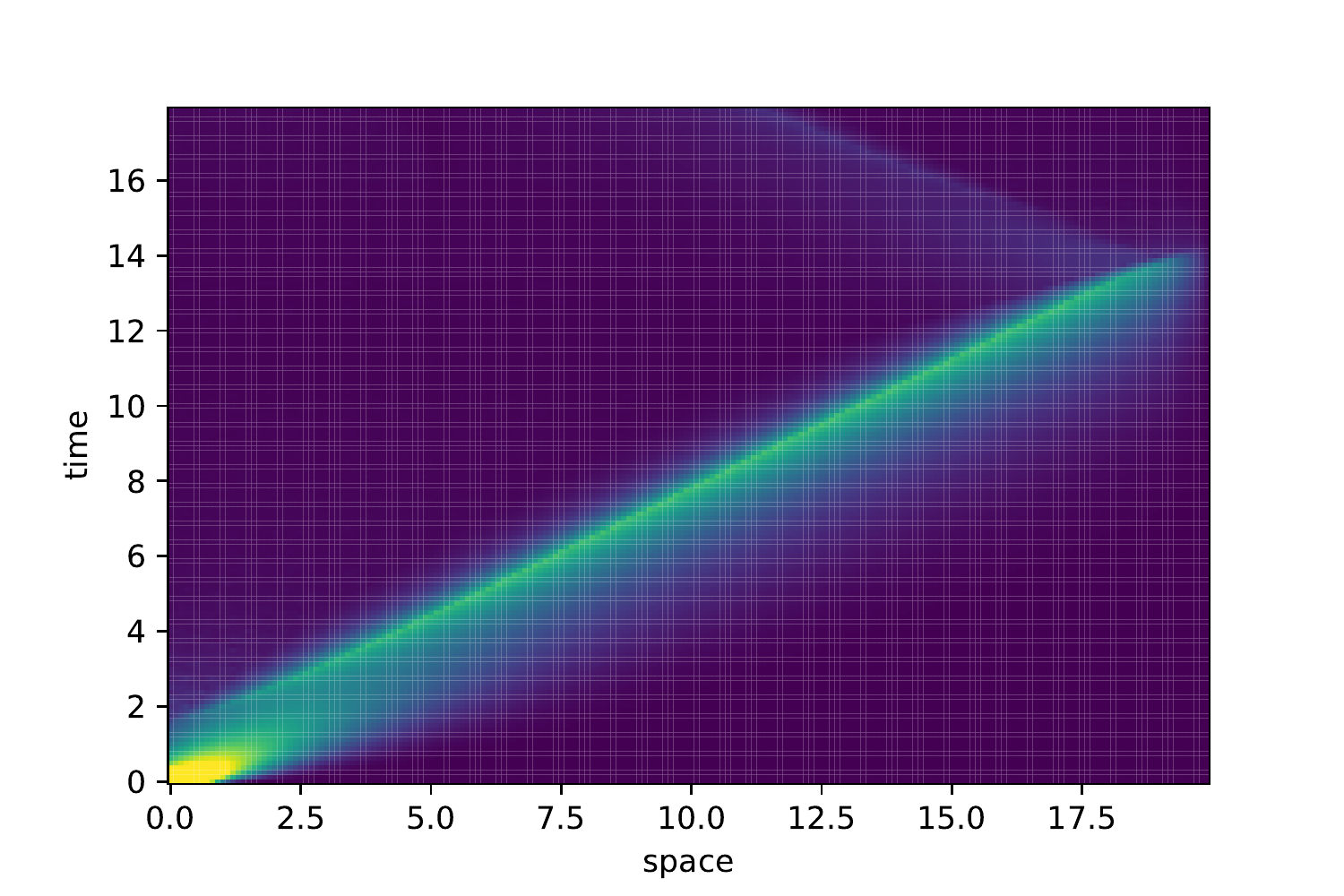}
\caption{Numerical simulation of model \eqref{eq:saragosti} for a half-gaussian initial density of bacteria.}\label{fig:saragosti}
\end{center}
\end{figure}

In this section, we present some  study performed a decade ago, revisiting original Adler's experiment, see Figure \ref{fig:band}. Inspired by massive tracking analysis, Saragosti et al \cite{saragosti_mathematical_2010} proposed a simple model for the propagation of chemotactic waves of bacteria, including two signals (see also \cite{xue_travelling_2011} for an analogous approach developed independently at the same time).  The macroscopic model is the following:
\begin{equation}
\begin{cases}
\dfrac{\partial \rho}{\partial t} +  \dfrac{\partial }{\partial x}\left ( - d  \dfrac{\partial \rho}{\partial x} + \rho \left ( \chi_S \sign \left ( \dfrac{\partial S}{\partial x} \right ) + \chi_A \sign \left ( \dfrac{\partial A}{\partial x} \right ) \right ) \right )  =  0\,,
\medskip\\
\dfrac{\partial S}{\partial t} = D_S \dfrac{\partial^2 S}{\partial x^2} - \bk(S,\rho) \,,
\medskip\\
\dfrac{\partial A}{\partial t} = D_A \dfrac{\partial^2 A}{\partial x^2} + \beta \rho - \alpha A\,.
\end{cases}\label{eq:saragosti}
\end{equation}
As compared to \eqref{eq:extended KS}, it is supplemented with a second chemical signal, $A$, which plays the role of a communication signal released by the cell population (hence, the source term $+\beta\rho$), and naturally degraded at a constant rate $\alpha>0$. Indeed, bacteria are known to secrete amino-acids, which play the role of a chemo-attractant as part of a positive feedback loop \cite{berg_e._2004,mittal_motility_2003}

Moreover,  bacteria are assumed to respond to the signal in a binary way at the macroscopic scale: the advection speed associated with each signal ($S,A$) can  take only two values, respectively  $\pm \chi_S$, $\pm \chi_A$, depending on the direction of the gradients. Then, the total advection speed is simply the sum of the two components. This was derived in \cite{saragosti_mathematical_2010} from a kinetic model at the mesoscopic scale, assuming a strong amplification during signal integration, see also \cite{calvez_chemotactic_2019} for a discussion. This can be viewed as an extremal choice of the advection speed proposed by Rivero et al \cite{rivero_transport_1989}, in the regime $f \to +\infty$ \eqref{eq:rivero}. The biophysical knowledge about the details of signal integration in bacteria {\em E. coli} have increased in the meantime \cite{tu_modeling_2008,
kalinin_logarithmic_2009,
jiang_quantitative_2010,
si_pathway-based_2012}. Actually, the logarithmic sensing is a good approximation in a fairly large range of signal concentrations. However, we retain this simple, binary, choice for theoretical purposes. 

As for the Keller-Segel model, traveling waves for \eqref{eq:saragosti} have the great advantage of being analytically solvable, essentially because the problem reduces to an equation with  piecewise constant coefficients. Introduce again the variable $z = x-ct$ in the moving frame at (unknown) speed $c$. Then, we have the following result:
\begin{theorem}[Saragosti et al \cite{saragosti_mathematical_2010}]
There exist a speed $c>0$, and a positive limit value $S_{-}<S_\init$, such that the system \eqref{eq:saragosti} admits a stationary  solution in the moving frame $(\rho(x - ct), S(x - ct), A(x-ct))$, such that $\rho$ is positive and integrable, $\int_\mathbb{R} \rho(z)\, dz = M$, $A$ decays to zero on both sides, and $S$ is increasing between the following limiting values 
\begin{equation*}
\begin{cases}
\lim_{z\to-\infty}S(z) = S_-\,, \quad\\
\lim_{z\to+\infty}S(z) = S_\init\,.
\end{cases}
\end{equation*}
Moreover, the speed $c>0$ is  determined by the following implicit relation,
\begin{equation}
\label{eq:speed}
\chi_S - c = \chi_A\dfrac{c}{\sqrt{c^2 + 4 \alpha D_A}}\, .
\end{equation}
\end{theorem}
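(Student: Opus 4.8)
The plan is to construct the wave explicitly by exploiting the piecewise-constant structure of the advection term. First I would fix the moving-frame variable $z = x - ct$, and—by translational invariance—normalize the position of the density peak to $z = 0$, so that on $\{z < 0\}$ we expect $\partial_z S < 0$ and $\partial_z A > 0$ (both signals pulling cells to the right, i.e. strong advection at the back), while on $\{z > 0\}$ we expect $\partial_z S > 0$ and $\partial_z A > 0$ (the $S$-gradient now opposing, so a weaker net drift, reducing to pure diffusion when $\chi_S = \chi_A$). Under this ansatz the $\rho$-equation becomes a linear constant-coefficient ODE on each half-line: $-d\rho'' - c\rho' + (\pm\chi_S + \chi_A)\rho' $ type balance, which integrates once (using the vanishing flux at $\pm\infty$) to $-d\rho' = (c + \sigma)\rho$ with $\sigma$ the relevant signed advection constant on each side. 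This yields $\rho(z) = \rho(0)\,e^{\lambda_- z}$ for $z<0$ and $\rho(z) = \rho(0)\,e^{-\lambda_+ z}$ for $z>0$, with explicit $\lambda_\pm>0$; integrability forces the signs, and one checks the exponents are positive precisely under a condition like $\chi_S + \chi_A > c$ and $\chi_A - \chi_S < c$ (to be verified a posteriori from the speed relation).

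Next I would recover $S$ and $A$ from their linear equations. For $A$, the moving-frame equation $D_A A'' + c A' + \beta\rho - \alpha A = 0$ is linear with the known source $\rho$; since $\rho$ is a sum of two decaying exponentials, $A$ is found by variation of parameters (or by matching exponential modes of $e^{-(c \mp \sqrt{c^2+4\alpha D_A})z/(2D_A)}$), and one checks $A \to 0$ at both ends and—crucially—that $A' > 0$ on $\{z<0\}$ and $A' > 0$ on $\{z>0\}$, consistent with the sign ansatz. The characteristic roots $\mu_\pm = \frac{-c \pm \sqrt{c^2 + 4\alpha D_A}}{2D_A}$ are exactly what produces the $\sqrt{c^2 + 4\alpha D_A}$ appearing in \eqref{eq:speed}. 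For $S$, integrating $D_S S'' + cS' = \bk(S,\rho)$ once over $\mathbb{R}$ gives the mass/flux balance $c(S_\init - S_-) = \int_\mathbb{R} \bk(S,\rho)\,dz$, which (together with the analogous computation that pins $S_-$) determines the limiting values; monotonicity of $S$ follows from a maximum-principle/sign argument on $S'$.

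The speed relation \eqref{eq:speed} itself I would obtain from the matching condition for $A$ at $z=0$: continuity of $A$ and $A'$ across the peak, combined with the jump in $\rho'$ (the density profile has a corner at $z=0$), yields one scalar equation. The cleanest route is probably to compute $A'(0^-)$ and $A'(0^+)$ from the explicit one-sided representations and equate them; alternatively, integrate the $A$-equation against a suitable test function. After eliminating $\rho(0)$, $\lambda_\pm$, and the amplitude constants, this should collapse to $\chi_S - c = \chi_A\, c/\sqrt{c^2 + 4\alpha D_A}$. One then checks this equation has a unique positive root $c \in (0,\chi_S)$: the left side decreases linearly from $\chi_S$ to $-\infty$, the right side increases from $0$ to $\chi_A$, so they cross exactly once, and at the crossing $c < \chi_S$, which a posteriori validates the sign pattern assumed at the outset (in particular $\partial_z S > 0$ on the right, since the net drift there is $\chi_A - \chi_S < 0 < c$... one must track the inequalities carefully).

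The main obstacle I anticipate is not the algebra but the self-consistency of the sign ansatz: the whole construction presumes we know the signs of $\partial_z S$ and $\partial_z A$ on each half-line before we have solved for $S$ and $A$, and the problem is genuinely nonlinear through $\bk(S,\rho)$ and through the fact that $c$ is an eigenvalue to be found. The rigorous version requires either a fixed-point argument (guess the signs, solve the resulting linear problems, verify the signs) or a careful phase-plane analysis as in Remark~\ref{rem:1} for the companion $\bk(S,\rho) = kS\rho$ case. A secondary subtlety is that when $\chi_S = \chi_A$ the right half-line carries \emph{no} advection, so $S$ solves a pure diffusion equation there with a source localized near the peak; one must confirm $S$ still reaches the finite limit $S_\init$ at $+\infty$ rather than growing, which constrains the admissible consumption rate $\bk$ and is the place where the positivity-of-$S$ discussion from Section~\ref{sec:posistab} re-enters. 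For the purposes of this note, giving the explicit profiles and the matching computation leading to \eqref{eq:speed}, with the sign verification done a posteriori from the speed relation, is the natural level of rigor.
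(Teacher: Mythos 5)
Your overall strategy (exploit the piecewise-constant advection, get two-sided exponentials for $\rho$, solve the linear $A$-equation with characteristic rates $\mu_\pm=\frac{1}{2D_A}\left(\pm c+\sqrt{c^2+4\alpha D_A}\right)$, close with a scalar condition at $z=0$, then check that the resulting equation has a unique root $c\in(0,\chi_S)$) is the same as the paper's, and the monotone-crossing argument for uniqueness of the root is a nice explicit addition. However, your sign ansatz is scrambled, and it matters. You posit $\partial_z S<0$ on $\{z<0\}$, $\partial_z S>0$ on $\{z>0\}$, and $\partial_z A>0$ on both sides. This contradicts the structure asserted in the statement itself ($S$ is increasing across the whole line; $A$ decays to zero on both sides, hence is peaked, not monotone), and it is internally inconsistent with your own later remarks (the exponent conditions $\chi_S+\chi_A>c$, $\chi_A-\chi_S<c$, and ``pure diffusion on the right when $\chi_S=\chi_A$'' only hold for the correct ansatz). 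Worse, with your stated signs the advection would be $\chi_A-\chi_S$ at the back and $\chi_S+\chi_A$ at the front, and the zero-flux relation $-c\rho-d\rho'+\sigma\rho=0$ would then require $\chi_S+\chi_A<c<\chi_A-\chi_S$, which is impossible: no positive integrable profile exists under that ansatz. The correct picture is $\sign(\partial_z S)=+1$ everywhere (so the $\rho$-equation decouples from $S$) and $A$ attaining a unique maximum at the switch point $z=0$, giving advection $\chi_S+\chi_A$ for $z<0$ and $\chi_S-\chi_A$ for $z>0$.

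The second, more substantive gap is the origin of the speed relation \eqref{eq:speed}. You propose to get it from ``continuity of $A$ and $A'$ across the peak.'' But $A=\beta\mathcal{A}*\rho$, with $\mathcal{A}$ the fundamental solution of $-c\frac{d}{dz}-D_A\frac{d^2}{dz^2}+\alpha$, is automatically $\mathcal{C}^1$ at $z=0$ for \emph{every} admissible $c$; matching $A$, $A'$ and imposing decay at $\pm\infty$ determines $A$ uniquely for each $c$ and yields no constraint at all. The equation that actually selects $c$ is the a posteriori validation of the ansatz: the point where $A$ changes monotonicity must coincide with the point $z=0$ where the advection coefficient switches, i.e.\ $\frac{dA}{dz}(0)=0$. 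Computing this with the two-sided exponential profiles gives $\lambda_-\mu_-=\lambda_+\mu_+$, which is equivalent to \eqref{eq:speed}. Without this condition your construction produces a one-parameter family of candidate profiles and never pins down $c$. Two further small confusions to fix: it is $A$, not $S$, that carries the source $+\beta\rho$ ($S$ has the sink $-\bk(S,\rho)$), so your worry about $S$ ``growing at $+\infty$ from a localized source'' is misplaced; and monotonicity of $S$ follows easily from $D_S S''+cS'=\bk(S,\rho)\geq 0$ together with the boundary conditions, independently of the value of $\chi_S-\chi_A$.
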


\begin{proof}
Contrary to the proof of Theorem \ref{th:KS}, the wave speed $c$ cannot be computed by a direct argument.

As a preliminary step, we should prescribe the environmental conditions, as they are expected heuristically to be seen by the bacteria, see Figure \ref{fig:gradients}.
On the one hand, we seek an increasing profile $S$, hence  $\sign\left ( \frac{dS}{dz} \right ) = +1$, and the equation on the density profile  $\rho$ is decoupled from the dynamics of $S$.  On the other hand, we assume that 
the communication signal $A$ reaches a unique maximum, that can be set at $z = 0$ by translational invariance. The validation of this ansatz, {\em a posteriori}, will set the equation for $c$ \eqref{eq:speed}. 

The equation for $\rho$ has now piecewise constant coefficients in the moving frame,
\begin{equation*}
- c \dfrac{d \rho}{dz} +  \dfrac{d }{dz}\left ( - d  \dfrac{d \rho}{dz} + \rho \left ( \chi_S  + \chi_A \sign \left ( -z \right ) \right ) \right )  =  0\,.
\end{equation*}
%
Hence, $\rho$ is a combination of two exponential functions, 
\begin{equation*}
\rho(z) = 
\begin{cases}
\exp\left(\lambda_- z \right) & \text{for $z <0$,\quad $\lambda_- = \dfrac{ - c + \chi_S + \chi_A}{d}$ \; (signals are aligned),}\medskip\\
\exp\left(-\lambda_+ z \right) & \text{for $z >0$, \quad $\lambda_+ = \dfrac{  c - \chi_S + \chi_A }{d}$ \;   (signals are competing).}
\end{cases}
\end{equation*}
Next, the attractant concentration $A$ can be computed explicitly, by convolving the source term $\beta \rho$ with the fundamental solution of the elliptic operator $-c \frac{d}{dz} - D_A \frac{d^2}{dz^2} + \alpha$, denoted by $\mathcal A$, that is, $A = \beta \mathcal A*\rho$. Coincidentally, $\mathcal A$ shares the same structure as $\rho$, namely $\mathcal A(z) = a_0 \exp(\mu_- z)$ for $z<0$ and $\mathcal A(z) = a_0\exp(-\mu_+ z)$ for $z>0$, with $\mu_\pm = \frac1{2D_A}\left ( \pm c + \sqrt{c^2 + 4 \alpha D_A} \right )$, and $a_0$ is a normalizing factor. 

It remains to check the preliminary ansatz, that is, $A$ changes monotonicity at $z=0$. A straightforward computation yields
\begin{equation*}
\frac{dA}{dz}(0) = \beta a_0 
\left( - \dfrac{1 }{1 + \lambda_-/\mu_+ } + \dfrac{1}{ 1 + \lambda_+/\mu_-} \right)\, .
\end{equation*}
Therefore, the construction is complete, provided $\lambda_-\mu_- = \lambda_+\mu_+$, 
which is equivalent to \eqref{eq:speed}. 
\end{proof}

To partially conclude, let us highlight the fact that  cohesion in the wave  is guaranteed by the local aggregation signal $A$. To put things the other way around, in the absence of the driving signal $S$, the cells can aggregate thanks to the secretion of $A$, and the density reaches a stationary state (standing wave). In turn, this cohesive state can travel (with some deformation) in the presence of the (self-generated) driving signal $S$. To make the link with SGG in developmental biology \cite{dona_directional_2013}, let us point to the modeling study  \cite{streichan_collective_2011} which is devoted to the  migration of cell collectives in the lateral line during development of the zebrafish. There, it is assumed that the rod of cells maintains its shape {\em per se} with a constant length which is a parameter of the model, see also \cite{carmona-fontaine_complement_2011} for biological evidence of cell attraction during collective motion.

\section{Scenario 2: cell leakage compensated by growth}

\label{sec:scenario 2}

\begin{figure}
\begin{center}
\includegraphics[width=.8\linewidth]{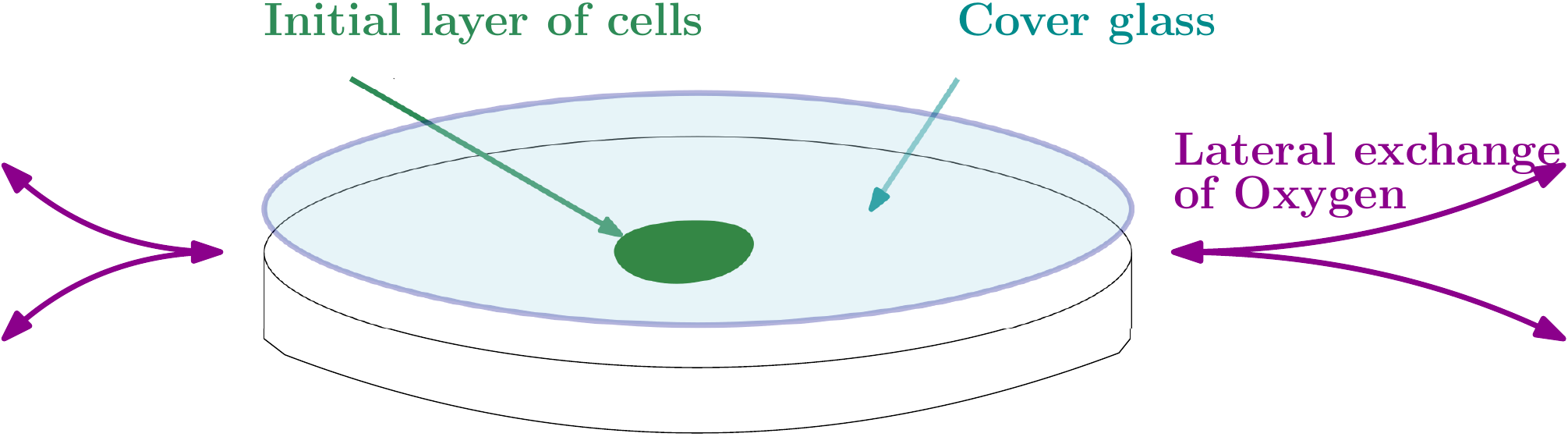}(a)\vspace{25pt}\\
\includegraphics[width=.8\linewidth]{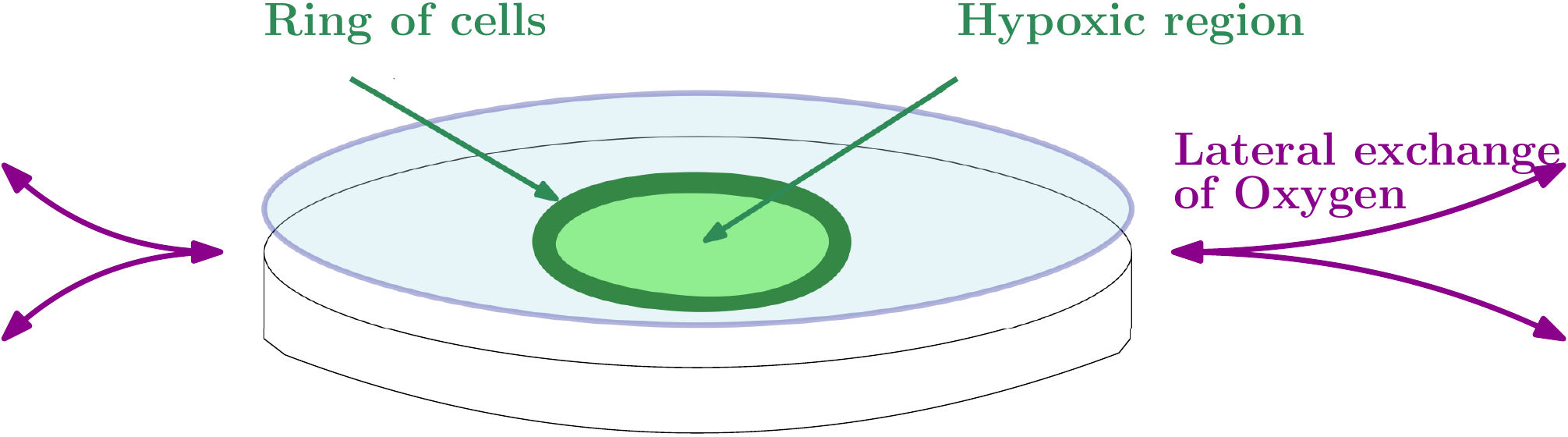}(b)
\caption{Schematic view of the experimental set-up in \cite{cochet-escartin_hypoxia_2021}. (a) An initial layer of Dicty cells is deposited at the center of the plate, and covered with a large glass coverslip (after \cite{deygas_redox_2018}). This vertical confining reduces drastically the inflow of oxygen within the plate, by restricting it to lateral exchanges. (b) Soon after the beginning of the experiment, a ring of cells emerges, which is traveling over several days at constant speed with a well-preserved shape. The moving ring consumes almost all the available oxygen, so that the center of the colony is at very low concentration, below 1\%.   
\label{fig:coverglass}}
\end{center}
\end{figure}

\begin{figure}
\begin{center}
\includegraphics[width=.8\linewidth]{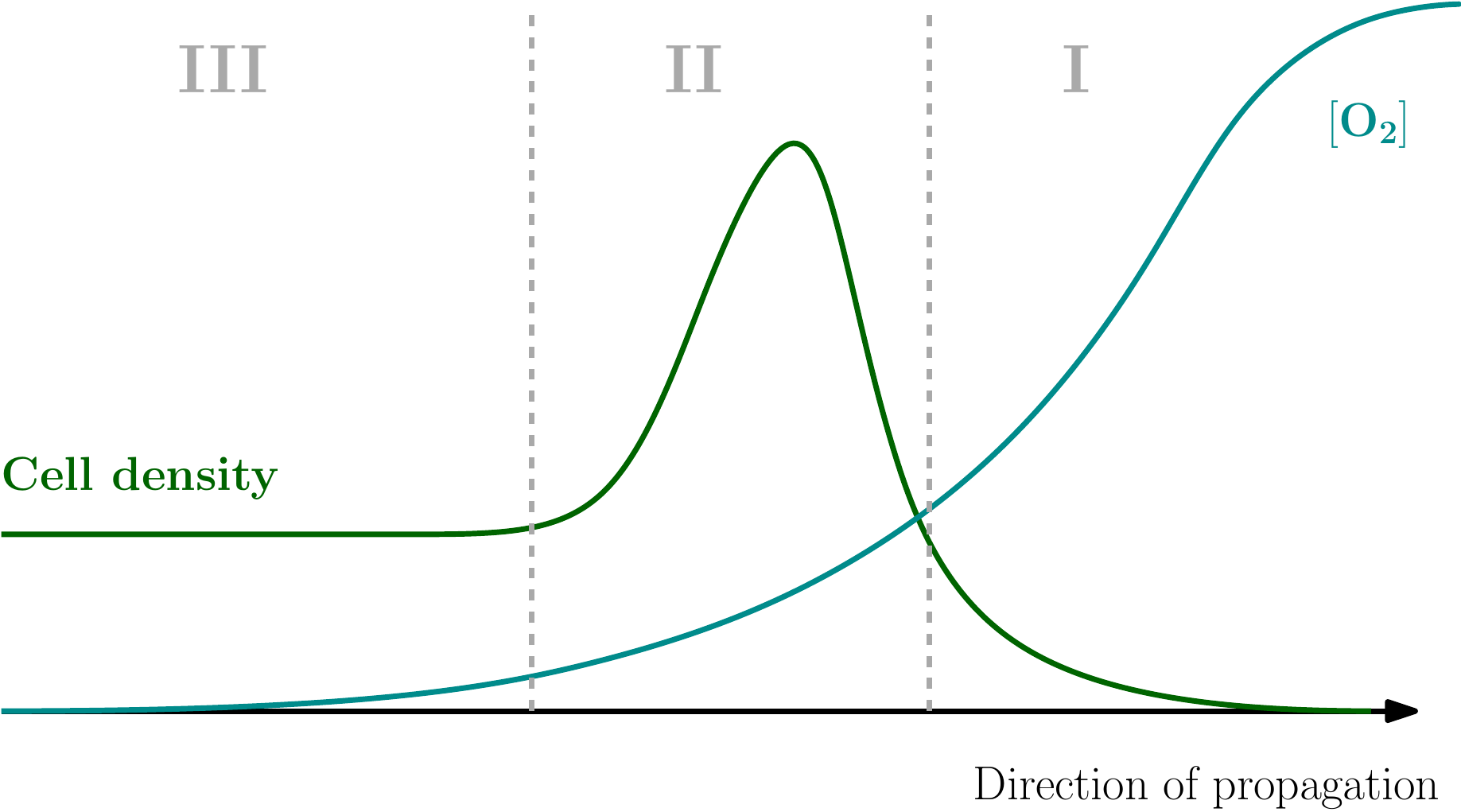}(a)\bigskip\\
\includegraphics[width=.8\linewidth]{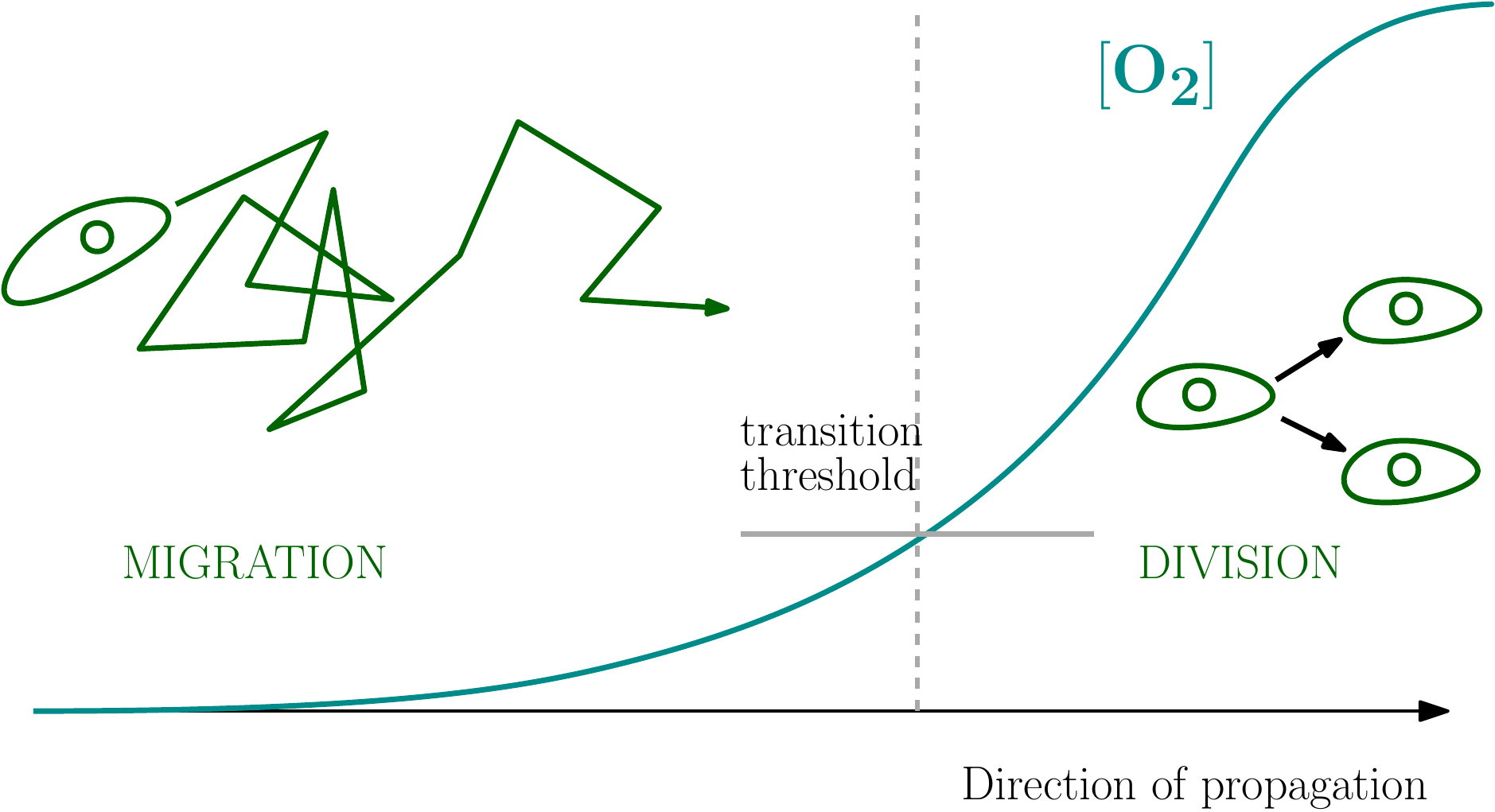}(b)
\caption{Graphical description of the 'go-or-grow' model \eqref{eq:extended KS growth}. (a) Individual cell tracking in \cite{cochet-escartin_hypoxia_2021} shows different cell behaviours depending on the relative position with respect to the tip of the ring: (I) ahead of the moving ring, cells exhibit unbiased motion, together with division events; (II) inside the ring, cells exhibit clear directional motion (which indeed results in the formation and maintenance of the ring); (III) the trail of cells which are left behind exhibit unbiased motion, again, with more persistent trajectories (but this last observation is neglected in the model, because it was shown to have limited effect). (b) We hypothesize a single transition threshold $S_0$ such that cells can divide above the threshold, while they move preferentially up the gradient below the threshold, when oxygen is limited. The unbiased component of cell motion (diffusion) is common to both sides of the threshold.    
\label{fig:minimal}}
\end{center}
\end{figure}

In this section, we present a recent  model of SGG, including localized (signal-dependent) growth \cite{cochet-escartin_hypoxia_2021}. This work was motivated by aeroactic waves of Dicty observed in vertically confined assays, in which oxygen is consumed by the cells and is soon limited at the center of the colony, see Figure \ref{fig:coverglass}. We refer to \cite{cochet-escartin_hypoxia_2021} for the experimental details. The model introduced in \cite{cochet-escartin_hypoxia_2021} was referred to as a "go-or-grow" model, a term coined in a previous work by Hatzikirou et al \cite{hatzikirou_go_2012} in the context of modeling cell  invasion in brain tumors. There, the basic hypothesis was that cells could switch between two states, or phenotypes: a migrating state 'go' (with enhanced random diffusion), and a proliferating state 'grow' (with enhanced rate of division), following previous works in the same context (see {\em e.g.} \cite{fedotov_migration_2007}). In \cite{hatzikirou_go_2012} it was assumed that hypoxia (lack of oxygen) triggers the switch in the long term dynamics of the system, by selection of the migrating phenotype, but in a global manner (oxygen supply was accounted for via the constant carrying capacity, as one parameter of the cellular automaton). Later contributions considered PDE models with density-dependent switch (see \cite{stepien_traveling_2018}, as opposed to \cite{fedotov_migration_2007} where the switching rate is not modulated, and also the experimental design of density-dependent motility in bacteria \cite{liu_sequential_2011}). 

In \cite{cochet-escartin_hypoxia_2021}, the go-or-grow hypothesis was revisited, by studying an expanding ring of Dicty cells, with limited supply of oxygen. Figure \ref{fig:minimal}a shows the cell density profile, as it is observed in experiments. Figure \ref{fig:minimal}b summarizes the minimal assumption of an oxygen-dependent switch, as proposed in \cite{cochet-escartin_hypoxia_2021}. It was hypothesized that the transition between the proliferating state and the migrating state is modulated by the level of oxygen, with a sudden change of phenotype at some threshold $S_0$. Above this threshold, when oxygen is available in sufficient quantity, cells exhibit slow random (diffusive) motion and divide at some constant rate. Below this threshold, when oxygen is limited, cells stop dividing and move preferentially up the oxygen gradient. The latter hypothesis (directional motion) is different from the aforementioned go-or-grow models \cite{fedotov_migration_2007,
hatzikirou_go_2012,stepien_traveling_2018}. It is consistent with the observations of individual tracking within the cell population in the bulk of the wave in \cite{cochet-escartin_hypoxia_2021}. 

The following model recapitulates these assumptions,
\begin{equation}
\begin{cases}
\dfrac{\partial \rho}{\partial t} +  \dfrac{\partial }{\partial x}\left ( - d  \dfrac{\partial \rho}{\partial x} + \rho \bchi\left (S,\dfrac{\partial S}{\partial x}\right )    \right )  =  \br(S) \rho\,,
\medskip\\
\dfrac{\partial S}{\partial t} = D \dfrac{\partial^2 S}{\partial x^2} - \bk(S,\rho) \,.
\end{cases}\label{eq:extended KS growth}
\end{equation}
with the specific choice 
\begin{equation}\label{eq:gogrow}
\bchi\left (S,\dfrac{\partial S}{\partial x}\right ) = \chi \sign\left( \dfrac{\partial S}{\partial x}\right ) \mathbf{1}_{S<S_0}\,, \quad \br(S) = r \mathbf{1}_{S>S_0}\,.
\end{equation}
This can be viewed as another variation of \eqref{eq:extended KS} including growth. It can also be viewed as an extension of the celebrated F/KPP equation, with a signal-dependent growth saturation, and including  advection (we refer to  \cite{ryzhik_traveling_2008,
calvez_traveling_2018,zeng_logarithmic_2019} and references therein for more classical synthesis of the F/KPP equation and the Keller-Segel model of cellular aggregation). Interestingly, an analogous model was proposed in \cite{franz_travelling_2013}, following a general motivation, and beginning with the statement that proliferation is necessary to sustain wave propagation. 
As compared with \eqref{eq:extended KS growth}--\eqref{eq:gogrow},  in the latter work, the reproduction rate $\br$  is signal-dependent with a linear dependency, and there is no threshold on the chemosensitivity $\bchi$ which is simply a linear function of the gradient $\frac{\partial S}{\partial x}$.  As a consequence, the wave speed cannot be calculated analytically, in constrast with \eqref{eq:extended KS growth}--\eqref{eq:gogrow} (see Theorem \ref{th:cochet} below). 

Before we show the construction of traveling wave solutions for \eqref{eq:extended KS growth}--\eqref{eq:gogrow}, let us comment on the reason why such solutions can exist. The expected density profile exhibits a  plateau of cells left behind the wave, see  Figure \ref{fig:minimal}a. In the vertical confining assay  experiment with Dicty, this corresponds to cells that are still highly motile, but have lost the propension to move directionally. They cannot keep pace with the self-generated oxygen gradient. The increasing amount of cells which are left behind is compensated by the growth at the edge of the pulse. This localized growth term (above the oxygen threshold) creates a flux term (negative flux in the moving coordinate) which is  key to the mathematical construction of the wave. 

We can be more precise about the negative flux issued from cell division by looking at the traveling wave equation \eqref{eq:extended KS growth}--\eqref{eq:gogrow} in the moving coordinate $z = x-ct$. 
\begin{equation}\label{eq:gogrowTW}
- c\dfrac{d \rho}{dz} +  \dfrac{d}{dz}\left ( - d  \dfrac{d \rho}{dz} + \rho \bchi\left (S,\dfrac{d S}{dz}\right )    \right )  =  \br(S) \rho\,.
\end{equation}
Below the oxygen threshold, $S<S_0$, the right-hand-side vanishes, and we are left with a constant flux, 
\begin{equation}\label{eq:flux J}
- c \rho  - d  \dfrac{d \rho}{dz} + \rho \bchi\left (S,\dfrac{d S}{dz}\right ) = -J \,.
\end{equation}
By integrating \eqref{eq:gogrowTW} on $\{S>S_0\}$, and using the continuity of the flux at the interface $\{S=S_0\}$, we find
\begin{equation}\label{eq:flux neg}
J = r \int_{\{S>S_0\}} \rho(z)\, dz \,.
\end{equation}
Note that the continuity of the flux is a pre-requisite for the well-posedness of \eqref{eq:extended KS growth}--\eqref{eq:gogrow}, see \cite{demircigil_notitle_nodate} for a rigorous  analysis of this problem, and unexpected mathematical subtleties.

\begin{figure}
\begin{center}
\includegraphics[width = 0.8\linewidth]{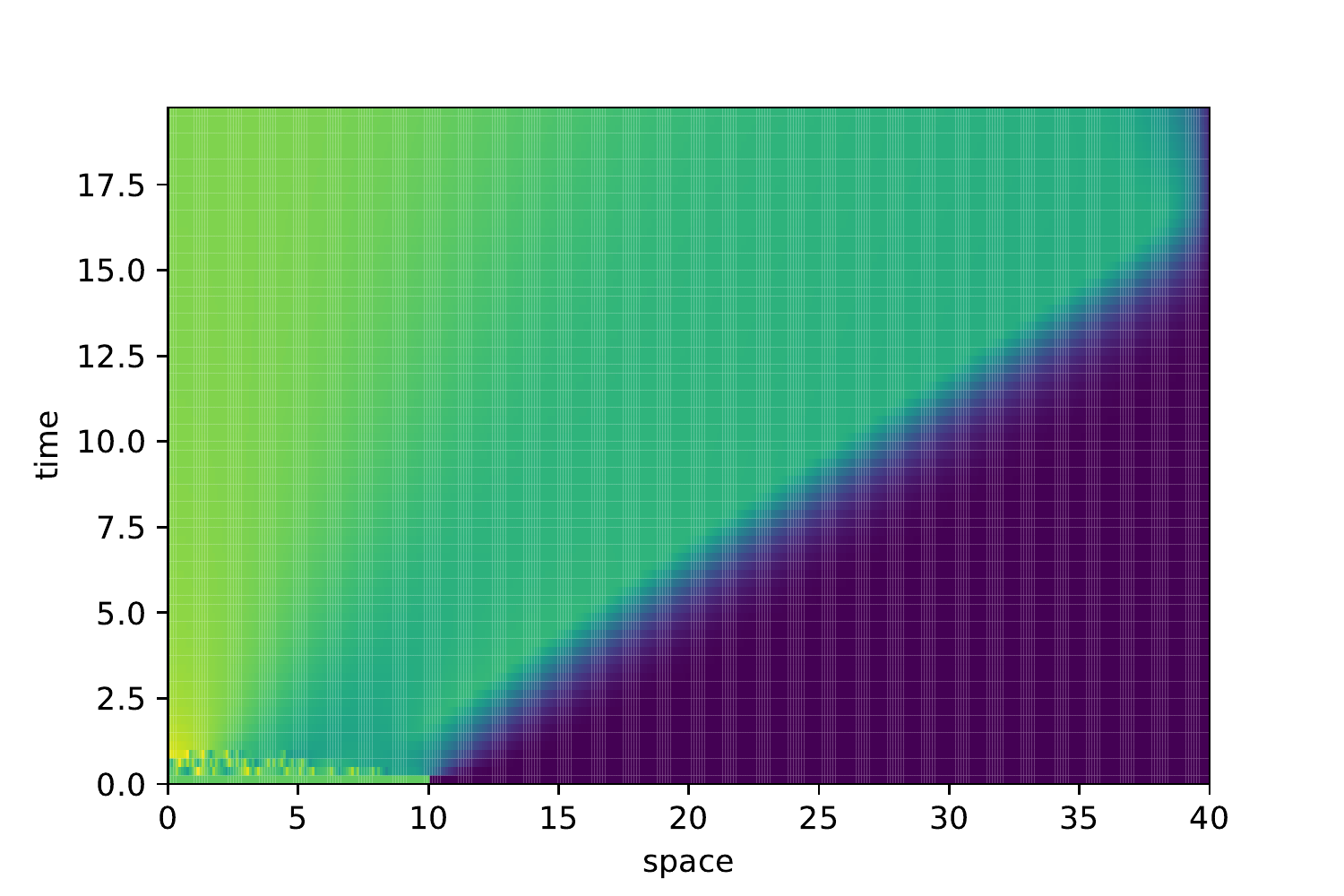}
\caption{Numerical simulation of model \eqref{eq:saragosti} for an initial plateau of cells restricted to the interval $\{x<10\}$.}\label{fig:cochet}
\end{center}
\end{figure}

%

\begin{theorem}[Cochet et al \cite{cochet-escartin_hypoxia_2021}, Demircigil \cite{demircigil_notitle_nodate}]
\label{th:cochet}
There exist a speed $c>0$, and a positive limit value $\rho_{-}>0$, such that the system  \eqref{eq:extended KS growth}--\eqref{eq:gogrow}, admits a stationary solution in the moving frame  $(\rho(x-ct), S(x-ct))$, such that $\rho$ and $S$ have the following limiting values
\begin{equation*}
\begin{cases}
\lim_{z\to-\infty}\rho(z) = \rho_-\,,  \\
\lim_{z\to+\infty}\rho(z) = 0\,,
\end{cases}
\quad 
\begin{cases}
\lim_{z\to-\infty}S(z) = 0\,,  \\
\lim_{z\to+\infty}S(z) = S_\init\,.
\end{cases}
\end{equation*} 
Moreover, the speed is given by the following dichotomy
\begin{equation}
c = \begin{cases}
2\sqrt{r d} & \text{if $\chi\leq\sqrt{rd}$}\,,\\
\chi + \dfrac{ rd}{\chi} & \text{if $\chi\geq\sqrt{rd}$}\,.
\end{cases}\label{eq:cochet}
\end{equation}
\end{theorem}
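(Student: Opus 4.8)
The plan is to build the profile explicitly in the moving frame $z=x-ct$, as in the proof of Theorem~\ref{th:KS}, exploiting the fact that under the natural ansatz the coefficients of \eqref{eq:gogrowTW} are piecewise constant. First I would use translation invariance to place the threshold crossing $S=S_0$ at $z=0$, and look for $S$ increasing — which is automatic when one exhibits $S$ with $D=0$, since then the second equation of \eqref{eq:extended KS growth} reads $S'=\bk(S,\rho)/c>0$. Thus $S<S_0$, hence $\bchi=\chi$ and $\br=0$, on $\{z<0\}$, while $S>S_0$, hence $\bchi=0$ and $\br=r$, on $\{z>0\}$. It is worth stressing from the start that neither $D$ nor the precise form of $\bk$ enters the wave speed or the density profile; they only shape $S$, whose reconstruction is then a routine ODE matter (explicit when $D=0$), with the otherwise free plateau amplitude $\rho_-$ below adjusted so that $S(+\infty)=S_\init$.

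The decisive observation concerns the rear region $\{z<0\}$. There \eqref{eq:flux J} holds with $J$ constant (this is \eqref{eq:flux neg} with $\br=0$), that is, $-c\rho-d\rho'+\chi\rho\equiv-J$ is a first-order relation for $\rho$, and its only solution bounded on $(-\infty,0)$ with a positive limit forces $c>\chi$ and is the constant $\rho\equiv\rho_-=J/(c-\chi)$; in particular $\rho'(0^-)=0$. The density simply cannot relax at the back — it sits on a plateau fed through a constant flux — and this rigidity is precisely what will determine the speed.

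On $\{z>0\}$ the density decouples and \eqref{eq:gogrowTW} becomes $d\rho''+c\rho'+r\rho=0$; a nonnegative profile with $\rho(+\infty)=0$ requires the characteristic roots $\mu_\pm=\tfrac1{2d}\bigl(-c\pm\sqrt{c^2-4rd}\bigr)$ to be real (hence negative), whence the F/KPP constraint $c\ge2\sqrt{rd}$, and $\rho=C_+e^{\mu_+z}+C_-e^{\mu_-z}$. The matching at $z=0$ is the delicate step, and among the ``unexpected subtleties'' of \cite{demircigil_notitle_nodate}: well-posedness of \eqref{eq:extended KS growth}--\eqref{eq:gogrow} forces continuity of the flux, not of $\rho'$, across the threshold, so $\rho(0)=\rho_-$ together with $\rho'(0^+)=\rho'(0^-)-\tfrac{\chi}{d}\rho_-=-\tfrac{\chi}{d}\rho_-$, the jump in the slope being driven precisely by the rear chemotactic drift $\chi$. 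Solving the $2\times2$ linear system for $C_\pm$ then determines the profile for each admissible $c$.

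It remains to single out the speed. The construction produces a nonnegative wave for every $c$ above a threshold value, and — consistently with the F/KPP selection principle, the rigorous dynamical statement being carried out in \cite{demircigil_notitle_nodate} and illustrated in Figure~\ref{fig:cochet} — the relevant one is the \emph{minimal} admissible $c$, which I would identify by examining the sign of $C_+$. If $\chi\le\sqrt{rd}$, the matched profile is already nonnegative at $c=2\sqrt{rd}$ — there $\rho(z)=\rho_-\bigl(1+\tfrac{\sqrt{rd}-\chi}{d}z\bigr)e^{-\sqrt{r/d}\,z}\ge0$ — whereas $c<2\sqrt{rd}$ produces oscillations; hence $c=2\sqrt{rd}$, the usual value set by growth at the leading edge. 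If $\chi\ge\sqrt{rd}$, a short computation gives $C_+\ge0\iff-\mu_-\ge\chi/d\iff c\ge\chi+rd/\chi$, so the minimal speed is $c=\chi+rd/\chi$; at that speed $C_+=0$ and $\rho(z)=\rho_-e^{-(\chi/d)z}$ on $\{z>0\}$, its decay rate $\chi/d$ being the one inherited from the rear flux — equivalently, the balance $(c-\chi)\rho_-=J=r\int_0^{\infty}\rho$ of \eqref{eq:flux neg} then reproduces $c=\chi+rd/\chi$. Together with the degenerate case $\chi=\sqrt{rd}$, where the two expressions coincide, this yields the dichotomy \eqref{eq:cochet}; the stated limits of $\rho$ and $S$ follow by inspection of the explicit profiles. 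I expect the real difficulty to lie not in this elementary bookkeeping but in the two accompanying points — proving that this minimal speed is the one actually selected by the Cauchy problem, across the pulled/pushed transition at $\chi=\sqrt{rd}$, and giving rigorous meaning to the low-regularity advection field at the threshold — which are the genuinely new content and are treated in \cite{demircigil_notitle_nodate}.
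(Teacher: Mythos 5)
Your construction is correct and follows essentially the same route as the paper: the same ansatz ($S$ increasing, threshold placed at $z=0$), the same rear argument (constant flux forces $c>\chi$ and the bounded solution is the plateau $\rho_-=J/(c-\chi)$, so $\rho'(0^-)=0$), the same linear F/KPP analysis ahead of the front with continuity of the flux rather than of $\rho'$ at the interface, yielding the dichotomy $c=2\sqrt{rd}$ for $\chi\le\sqrt{rd}$ and $c=\chi+rd/\chi$ for $\chi\ge\sqrt{rd}$. The only cosmetic difference is in the supercritical case, where the paper imposes the sharpest-decay ansatz and reads the speed off the flux condition $d\lambda=\chi$, whereas you scan all $c\ge 2\sqrt{rd}$, impose nonnegativity via the sign of the slow-mode coefficient, and take the minimal admissible speed, at which that coefficient vanishes --- an equivalent bookkeeping leading to the same wave.
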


Interestingly, the dichotomy in \eqref{eq:cochet} depends on the relative values of the advection speed (up the gradient) $\chi$, and half the reaction-diffusion speed of the F/KPP equation $\sqrt{r d}$. When the aerotactic biases are small (low advection speed $\chi$), then the wave is essentially driven by growth and diffusion. When biases are large, then the wave is mainly driven by aerotaxis. This has interesting implications in terms of maintenance of genetic diversity inside the wave (see \cite{roques_allee_2012,
birzu_fluctuations_2018} for diversity dynamics among reaction-diffusion traveling waves). In fact, the so-called dichotomy between {\em pulled} and {\em pushed} waves is at play here, see \cite{cochet-escartin_hypoxia_2021,demircigil_notitle_nodate} for more details and discussion. 

In contrast with the original Keller-Segel model \eqref{eq:speed KS}, the wave speed does not depend on the features of oxygen consumption and diffusion. 

\begin{proof}
As in Section \ref{sec:scenario 1}, the wave speed is not given {\em a priori}. We seek a monotonic oxygen profile, such that $\frac{dS}{dz}>0$. Therefore, the first equation reduces to
\begin{equation*}
-c \dfrac{d \rho}{d z} - d \dfrac{d^2 \rho}{d z^2}   
+ \dfrac{d }{d z} \left\{\begin{array}{ll} 0 & \text{if $S>S_0$} \smallskip \\
 \chi \rho & \text{if $S<S_0$} 
\end{array}\right\} \\
= \left\{\begin{array}{ll} r \rho & \text{if $S>S_0$} \smallskip \\
 0 & \text{if $S<S_0$} 
\end{array}\right\} \,.
\end{equation*}
By translational invariance, we assume that $S = S_0$ occurs at $z=0$. 

For $z<0$, we have by \eqref{eq:flux J}--\eqref{eq:flux neg},
\begin{equation}\label{eq:ODEz-}
  d \dfrac{d \rho}{d z} = J + (\chi - c)\rho\,,\quad J>0  \,.
\end{equation}
Suppose that $c\leq \chi$. Then, $d \frac{d \rho}{d z}\geq J >0$, which is a contradiction with the positivity of $\rho$. Hence, we must have $c>\chi$. The solution of \eqref{eq:ODEz-} is unbounded unless it is constant, that is $\rho = \frac{J}{c-\chi}$, and this is the natural choice we make for the construction. 


For $z>0$ we have the standard linear problem arising in the F/KPP equation (at small density),
\begin{equation*}
-c \dfrac{d \rho}{d z} - d \dfrac{d^2 \rho}{d z^2} = r \rho\,.
\end{equation*}
We look for exponential solutions $\exp(-\lambda z)$. The characteristic equation, $d\lambda^2 - c \lambda + r= 0$ has real roots when $c^2 \geq 4rd$. Then, we proceed by dichotomy. 

$\diamond$ The case $ c= 2\sqrt{rd}$. The general solution for $z>0$ is of the form $(a + b z ) \exp(-\lambda z)  $, with $\lambda = \sqrt{\frac{r}{d}}$ the double root. The constant $a$ coincides with $\frac{J}{c-\chi}$ by continuity of the density (its value does not really matter here).  Continuity of the flux at the interface $z=0$ yields $-d(b-a\lambda ) = \chi a$, hence $ b d  = a(\sqrt{rd} - \chi)$. Thus, the solution is admissible ($b\geq 0$) if, and only if $\chi \leq \sqrt{rd}$.   

$\diamond$ The case $ c> 2\sqrt{rd}$. Standard arguments in the construction of reaction-diffusion traveling waves imply to select the sharpest decay on the right side \cite{aronson_multidimensional_1978,van_saarloos_front_2003}, namely $\rho = a\exp(-\lambda z)$, with  $\lambda = \frac1{2d} \left ( c + \sqrt{c^2 - 4 r d} \right )$. Continuity of the flux at the interface now writes $-d(-a\lambda) = \chi a$, which is equivalent to 
\begin{equation*}
2\chi - c =  \sqrt{c^2 - 4 r d} \quad \Leftrightarrow\quad \left (c = \chi + \dfrac{rd}\chi\right ) \; \& \; \left ( \chi > \frac c 2 \right )\,. 
\end{equation*}
It must be  checked {\em a posteriori} that $c> 2\sqrt{rd}$, which is immediate. The last inequality constraint ensures that $\chi>\sqrt{rd}$, in contrast with the other side of the dichotomy. 

Thus, the construction is complete. 
\end{proof}


The wavefront constructed above appears to be numerically stable, driving the long-time asymptotics, see Figure \ref{fig:cochet}. However, the very strong advection  at the back of the wave creates a decreasing density profile, which is actually constant at the back of the wavefront, in contrast with the experiments showing a non-monotonic pulse (Figure \ref{fig:minimal}). Several extensions were discussed in \cite{cochet-escartin_hypoxia_2021}.    

\subsubsection*{Logarithmic sensitivity.}
Below, we discuss a natural, yet original, extension of the previous result, restoring the logarithmic gradient in the advection term. More precisely, we consider \eqref{eq:extended KS growth} again, with the following choice of functions, instead of   \eqref{eq:gogrow}
\begin{equation}\label{eq:gogrowlog}
\bchi\left (S,\dfrac{\partial S}{\partial x}\right ) = \chi \log\left( \dfrac{\partial S}{\partial x}\right ) \mathbf{1}_{S<S_0}\,, \quad \br(S) = r \mathbf{1}_{S>S_0}\,.
\end{equation}
We present below a preliminary  result about the existence of traveling waves, followed by heuristic arguments about the determination of the speed, and some numerical investigation. 

\begin{theorem}\label{th:roxana}
Assume $D = 0$, and $\bk(S,\rho)  = k \rho S$ for some $k>0$.
There exists a speed $c>0$, and a positive limit value $\rho_{-}>0$, such that the system  \eqref{eq:extended KS growth}--\eqref{eq:gogrowlog}, admits a stationary solution in the moving frame  $(\rho(x-ct), S(x-ct))$, such that $\rho$ and $S$ have the following limiting values
\begin{equation*}
\begin{cases}
\lim_{z\to-\infty}\rho(z) = \rho_-\,, \\
\lim_{z\to+\infty}\rho(z) = 0\,,
\end{cases}
\quad 
\begin{cases}
\lim_{z\to-\infty}S(z) = 0\,,\\
\lim_{z\to+\infty}S(z) = S_\init\,.
\end{cases}
\end{equation*} 
Moreover, the speed is given by the following dichotomy
\begin{equation}
c = 2 \sqrt{r \max \left \{d, \chi \log\left ( \dfrac{S_\init}{S_0} \right )\right \}}\,.\label{eq:roxana}
\end{equation}
\end{theorem}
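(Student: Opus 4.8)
The plan is to look for a traveling wave in the moving frame $z=x-ct$ following the same three-step template as in the proof of Theorem~\ref{th:cochet}: prescribe the monotonicity of $S$, solve the profile equation piecewise across the threshold $\{S=S_0\}$, and fix the unknown speed $c$ by matching the pieces while keeping $\rho$ positive. I seek an increasing oxygen profile with $S(-\infty)=0$, $S(+\infty)=S_\init$, and by translational invariance I place the crossing $S=S_0$ at $z=0$, so that $S<S_0$ on $\{z<0\}$ (advection on, growth off) and $S>S_0$ on $\{z>0\}$ (growth on, advection off). The crucial simplification comes from $D=0$ together with $\bk(S,\rho)=k\rho S$: the signal equation in the moving frame reads $-cS'=-k\rho S$, hence $\rho=\tfrac{c}{k}(\log S)'$, i.e. the whole density profile is slaved to the logarithmic derivative of $S$. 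Integrating this identity over $\{z>0\}$ and using the boundary values of $S$ there gives $\int_{\{z>0\}}\rho=\tfrac{c}{k}\log(S_\init/S_0)$; combined with \eqref{eq:flux neg} this determines the division flux \emph{a priori}, namely $J=r\int_{\{S>S_0\}}\rho=\tfrac{rc}{k}\log(S_\init/S_0)=:\tfrac{rc}{k}L$.

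On $\{z>0\}$ the growth term is linear ($\br\equiv r$, the saturation being provided only by $S$ dropping below $S_0$), so the profile solves the linear F/KPP equation $-c\rho'-d\rho''=r\rho$ exactly as in the proof of Theorem~\ref{th:cochet}: decaying exponential solutions $e^{-\lambda z}$ exist only for $c\geq 2\sqrt{rd}$, with the usual selection of the relevant branch (a steep exponential when $c>2\sqrt{rd}$, or $(a+bz)e^{-\lambda z}$ at the critical speed). On $\{z<0\}$ the flux is constant, $-c\rho-d\rho'+\rho\,\bchi=-J$, and the logarithmic-gradient advection $\bchi=\chi(\log S)'=\tfrac{\chi k}{c}\rho$ (again using $D=0$, $\bk=k\rho S$) turns this into a Riccati-type first-order ODE
\begin{equation*}
d\,\rho' \;=\; \frac{\chi k}{c}\,\rho^2 - c\,\rho + J \,.
\end{equation*}
Its only solutions that remain bounded as $z\to-\infty$ are the constant equilibria, and these are real and positive precisely when the discriminant is nonnegative, i.e. $c^2\geq 4\tfrac{\chi k}{c}J=4\chi rL$, that is $c\geq 2\sqrt{r\chi\log(S_\init/S_0)}$; the back value $\rho_-$ is then the appropriate root of $\tfrac{\chi k}{c}\rho^2-c\rho+J=0$.

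Thus positivity of $\rho$ on the two half-lines forces $c\geq 2\sqrt{rd}$ and $c\geq 2\sqrt{r\chi L}$, i.e. $c\geq 2\sqrt{r\max\{d,\chi L\}}$ for any admissible wave, and I construct the profile at the minimal value $c=2\sqrt{r\max\{d,\chi\log(S_\init/S_0)\}}$, which is \eqref{eq:roxana}. This produces the announced dichotomy: if $d\geq\chi L$ then $c=2\sqrt{rd}$, the F/KPP side is at its critical speed $(a+bz)e^{-\lambda z}$ with $\lambda=\sqrt{r/d}$, and the Riccati side has two distinct equilibria with $\rho_-$ the smaller one (positivity of the right profile, i.e. $b\geq 0$, forces this choice); if $d\leq\chi L$ then $c=2\sqrt{r\chi L}$, the Riccati side is critical with a double equilibrium $\rho_-=\tfrac{c^2}{2\chi k}=\tfrac{2rL}{k}$, and the F/KPP side is strictly supercritical. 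In both regimes I close the construction by matching $\rho$ and its flux at $z=0$: continuity of $\rho$ together with the mass constraint $\int_{\{z>0\}}\rho=\tfrac{c}{k}L$ pin down the right-hand profile, while continuity of the flux then turns out to be automatic, thanks to the characteristic relation $d\lambda^2-c\lambda+r=0$. It then remains to check the ansatz \emph{a posteriori}: $S'=\tfrac{k\rho S}{c}>0$, so $S$ is indeed increasing and the threshold is crossed exactly once, $\rho$ stays positive throughout, and $(\rho,S)$ has the stated limits.

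The main obstacle is the \emph{determination} of the speed, rather than the mere existence of a profile. As for the F/KPP equation, a traveling profile exists for an entire interval $c\geq 2\sqrt{r\max\{d,\chi L\}}$, so the value \eqref{eq:roxana} is really the minimal, dynamically selected one; pinning it down rigorously requires a selection argument (stability and attractivity in the spirit of the pulled/pushed dichotomy, or a restriction to steep initial data) rather than a phase-plane count — which is precisely why the text offers heuristics and numerics here instead of a complete proof. A secondary difficulty is the bookkeeping at the interface $z=0$ in the two regimes: one has to verify that the selected Riccati equilibrium is positive and compatible with a nonnegative F/KPP profile, and in the regime $d<\chi L$ that the steep exponential branch must be present for the left trajectory to close up — and it is there that the $\max$ and the dichotomy in \eqref{eq:roxana} genuinely emerge.
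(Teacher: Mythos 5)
Your proposal follows essentially the same route as the paper: use $D=0$ and $\bk=k\rho S$ to slave $\rho$ to $\frac{c}{k}\frac{d(\log S)}{dz}$, deduce $J=\frac{rc}{k}\log(S_\init/S_0)$ from \eqref{eq:flux neg}, obtain the Riccati equation \eqref{eq:ODE z-} at the back whose nonnegative discriminant forces $c^2\ge 4r\chi\log(S_\init/S_0)$, combine with $c^2\ge 4rd$ from the linearized F/KPP side, and construct the profile at equality in \eqref{eq:roxana}; your explicit matching at $z=0$ (flux continuity being automatic once $J=r\int_{\{z>0\}}\rho$) is a reasonable way to carry out the ``simple phase plane analysis'' the paper only sketches.

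One assertion in your argument is, however, false: the bounded solutions of the back equation \eqref{eq:ODE z-} on $(-\infty,0]$ are \emph{not} only the constant equilibria. Whenever the discriminant is nonnegative, every trajectory starting at or above the smaller root stays positive and bounded and converges, as $z\to-\infty$, to the larger (or double) root; this is precisely the point of the paper's remark that one degree of freedom at the back cannot be removed, which is why the construction alone does not determine $c$ and why the minimal-speed selection is only a conjecture (Claim \ref{claim}). The error is not fatal for existence --- your constant-back profile, matched as you describe, is an admissible wave at the speed \eqref{eq:roxana} (and in the regime $\chi\log(S_\init/S_0)>d$ your left value $\rho_-=c^2/(2k\chi)$ agrees with the paper's double root) --- but you should drop the rigidity claim, and note that the wave observed numerically in the paper has a non-constant back decaying algebraically to the double root rather than a constant one. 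Your closing discussion of why the speed in \eqref{eq:roxana} is only the minimal admissible value, dynamically selected, is consistent with the paper's own position.
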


\begin{proof}
We proceed similarly as in the proof of the previous statement. The assumption $D = 0$ enables expressing the logarithmic gradient in terms of the density:
\begin{equation}\label{eq:log S}
-c \dfrac{d (\log S)}{dz} = - k \rho\,.
\end{equation}

For $z<0$ we have a constant (negative) flux at equilibrium in the moving frame \eqref{eq:flux J},
\begin{equation}\label{eq:J}
- c \rho - d   \dfrac{d \rho}{dz} + \chi \rho  \dfrac{d (\log S)}{dz} = - J < 0 \,.
\end{equation}
Combining \eqref{eq:log S} and \eqref{eq:J}, we get the ODE satisfied by the cell density profile at the back:
\begin{equation}\label{eq:ODE z-}
d   \dfrac{d \rho}{dz} = -  c \rho + \dfrac{k\chi}c \rho^2 +  J \,.
\end{equation}
This ODE comes with a sign condition, for the discriminant of the right-hand-side to be non-negative (otherwise $\rho$ cannot be positive for all $z<0$ when $\frac{d\rho}{dz}$ is uniformly positive), that is
\begin{equation}\label{eq:sign J}
 \dfrac{c^3}{4 k \chi} \geq J\,.
\end{equation}
This condition is complemented by the integration of \eqref{eq:log S} over $\{z>0\}$:
\begin{equation*}
c\log \left ( \dfrac{S_\init}{S_0} \right ) = k  \int_{0}^{+\infty} \rho(z)\, dz  = \frac{k}{r} J\,,
\end{equation*}
where the last identity follows from \eqref{eq:flux neg}. This yields the constraint 
\begin{equation}\label{eq:add restr}
\dfrac{c^3}{4 r \chi} \geq c\log \left ( \dfrac{S_\init}{S_0} \right ) \quad \Leftrightarrow\quad c^2 \geq 4r\chi\log \left ( \dfrac{S_\init}{S_0} \right )\,.
\end{equation}
This is one part of the condition in \eqref{eq:roxana}. The second part comes naturally from the constraint on the characteristic equation on  $\{z>0\}$, namely $c^2 \geq 4 r d$. It can be shown by simple phase plane analysis that admissible solutions exist in both cases when the inequality \eqref{eq:roxana} is an equality. 
\end{proof}

The previous analysis calls for a few comments:
\begin{enumerate}
\item Contrary to the former construction in Theorem \ref{th:cochet}, the latter construction does not come naturally with an equation for $c$. This is because there is no clear way to remove one degree of freedom on $\{z<0\}$ under the sign condition \eqref{eq:sign J}. Indeed, the solution of \eqref{eq:ODE z-} is naturally bounded for any intial condition, in opposition to \eqref{eq:ODEz-}.
\item Surprisingly, the additional restriction \eqref{eq:add restr} results from conditions imposed on the solution {\em at the back of the wave} on $\{z<0\}$, in opposition with the standard case, say for F/KPP and related equations, where it always come from conditions on $\{z>0\}$ (as it is the case for the classical restriction $c^2\geq 4 r d$).   
\end{enumerate}

\begin{figure}
\begin{center}
\includegraphics[width=.66\linewidth]{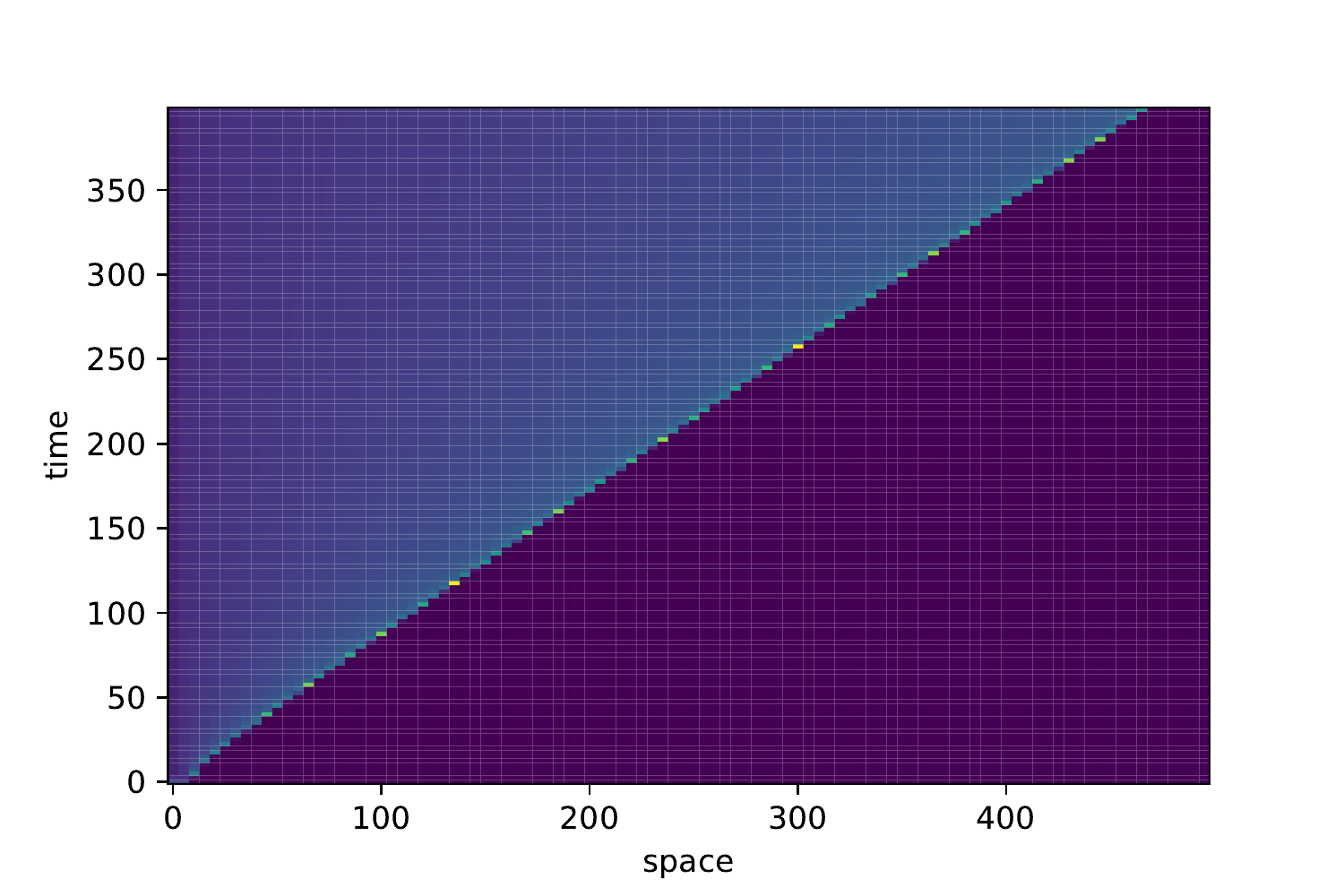}(a)\\
\includegraphics[width=.66\linewidth]{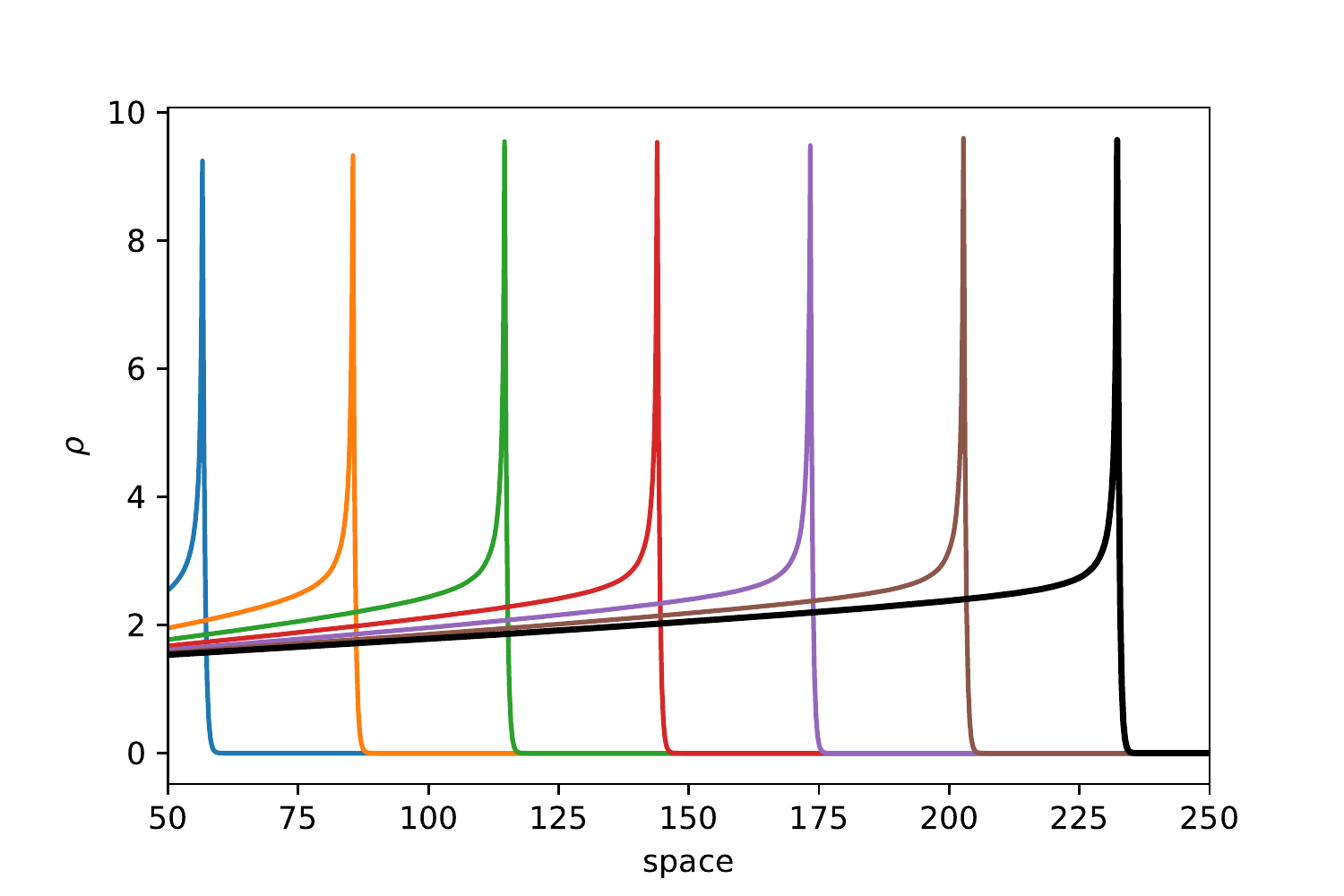}(b)\\
\includegraphics[width=.66\linewidth]{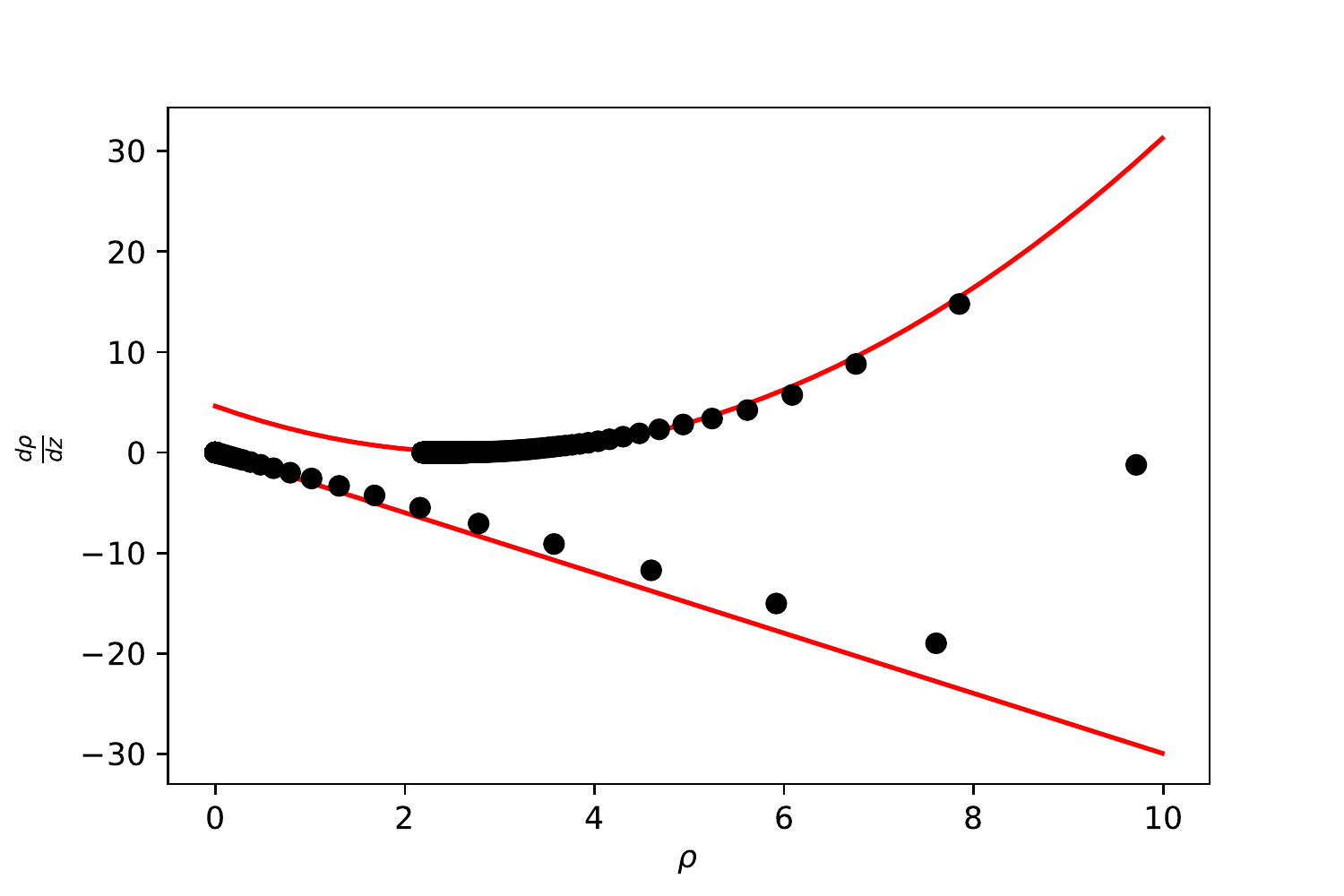}(c)\\
\caption{(a) Traveling wave propagation obtained after long time simulations of the Cauchy-problem \eqref{eq:extended KS growth}--\eqref{eq:gogrowlog} with parameters $(d=1,\chi = 2, r = 1, D = 0, k = 1, S_\init = 8, S_0 = 2$). (b) The density profile is shown at successive times in the moving frame. Note the low decay at the back of the wave, which is the signature of singular point in the ODE \eqref{eq:ODE z-} together with the choice of $J$ that cancels the discriminant in \eqref{eq:sign J}. The numerical speed is $c_\mathrm{num}\approx 3.17$, close to the theoretical one, $2\sqrt{\log(4)} \approx 3.33$. (c) To better assess our Claim  \ref{claim}, the numerical solution is plotted in the phase plane $(\rho,\rho')$ (black dots), against the theoretical curves, that is $\rho' = -\lambda \rho$ (for $z>0$), and $\rho' = \frac{k\chi}{cd}\left (\rho - \frac{c^2}{2k\chi}\right )^2$ \eqref{eq:ODE z-} (red lines). The isolated point on the right corresponds to the transition at $z=0$, where the expected theoretical profile has a  $\mathcal C^1$ discontinuity. We believe that the discrepancy is  due to numerical errors.  
\label{fig:roxana0}}
\end{center}
\end{figure}

\begin{figure}
\begin{center}
\includegraphics[width=.66\linewidth]{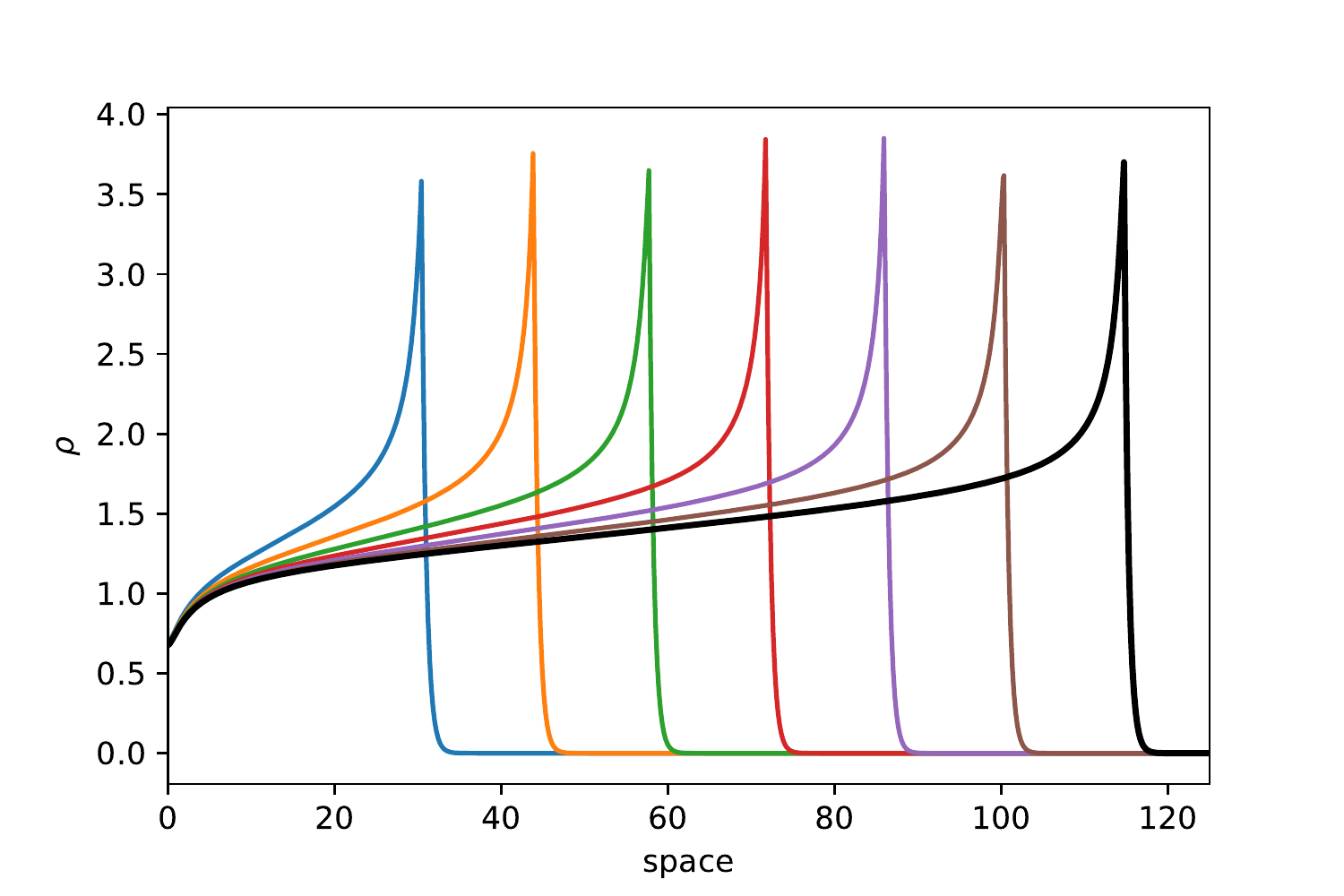}(a)\\
\includegraphics[width=.66\linewidth]{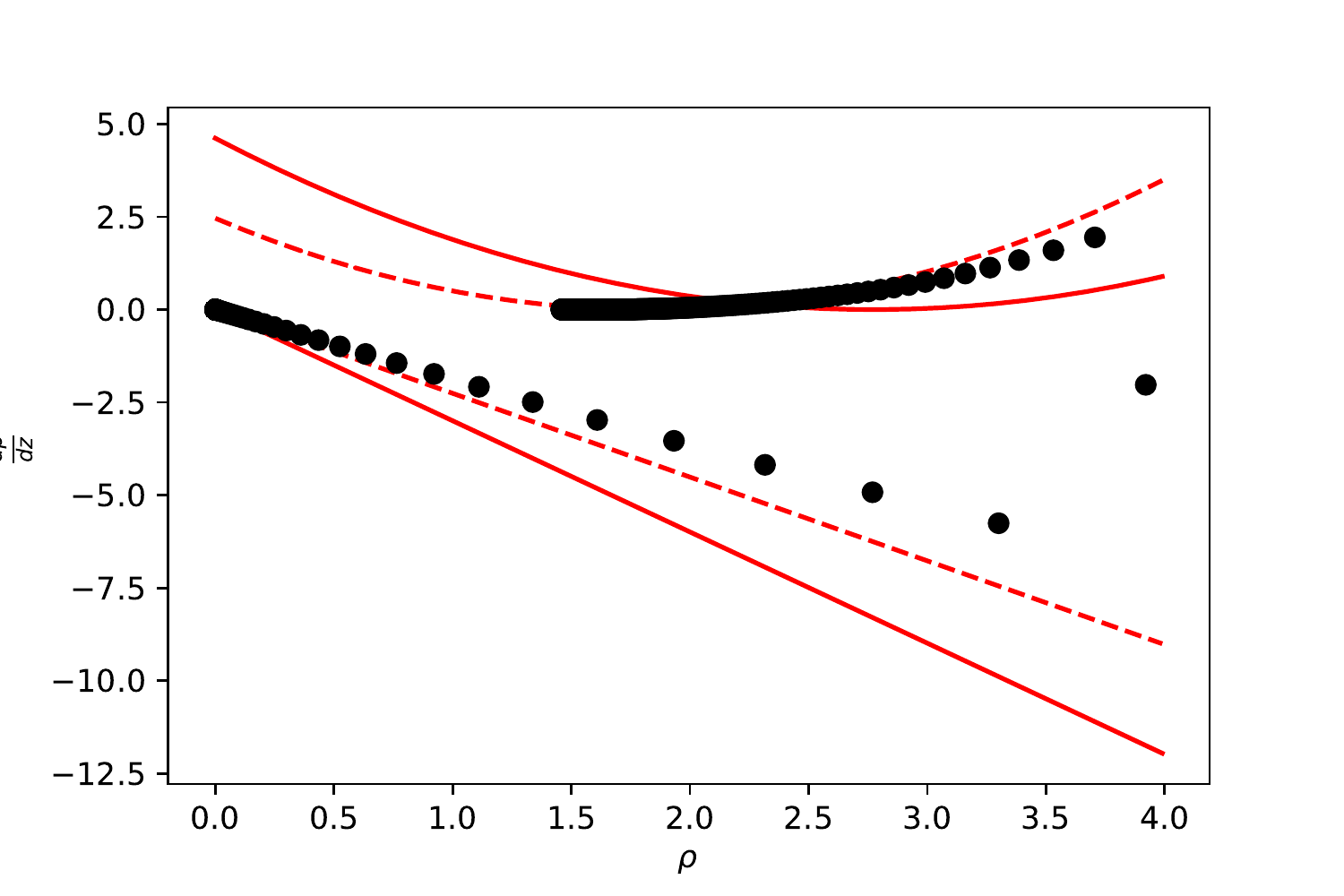}(b)
\caption{Same as in Figure \ref{fig:roxana0}, except for the diffusion coefficient of the chemical which is set to $D =1$. (a) We observe propagation of a traveling wave in the long time asymtptotic with a reduced speed. Clearly, the wave profile differs significantly from \ref{fig:roxana0}b. (b) In particular, the solution in the phase plane does not align with the theoretical expectation available in the case $D=0$ (red plain curves). It aligns much better with the theoretical expectation computed from the equations \eqref{eq:log S}--\eqref{eq:add restr} taking the reduced numerical speed as an input (red dashed curves). We believe that the discrepancy is  due to numerical errors.\label{fig:roxana1}}
\end{center}
\end{figure}

At this point, we conjecture that the minimal speed \eqref{eq:roxana} giving rise to admissible solutions is selected when the Cauchy problem is initiated with localized initial data. 

\begin{claim}\label{claim}
Starting from a compactly supported initial data, the asymptotic spreading speed of solutions to \eqref{eq:extended KS growth}--\eqref{eq:gogrowlog} is given by \eqref{eq:roxana}. 
\end{claim}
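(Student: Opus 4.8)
The plan is to establish the claim as a matching pair of one-sided bounds on the spreading speed of a solution issued from nonnegative, compactly supported data: for every $\varepsilon>0$, one wants $\rho(t,x)\to 0$ uniformly on $\{x\ge (c^\ast+\varepsilon)t\}$, and $\liminf_{t\to\infty}\,\inf_{0\le x\le (c^\ast-\varepsilon)t}\rho(t,x)>0$, where $c^\ast=2\sqrt{r\max\{d,\chi\log(S_\init/S_0)\}}$. Two effects must be disentangled. The Fisher/KPP contribution $2\sqrt{rd}$ should come from the leading edge, where $S\equiv S_\init>S_0$, so that $\rho$ solves there the \emph{linear} equation $\partial_t\rho=d\,\partial_{xx}\rho+r\rho$; a compactly supported subsolution of this equation (Aronson--Weinberger) gives the lower bound $2\sqrt{rd}$, provided one checks that the accumulated oxygen dose on its moving support stays below the switch level. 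The naive exponential supersolution, however, does \emph{not} close the matching upper bound: the advection term $-\partial_x(\chi\rho\,\partial_x\log S\,\mathbf{1}_{S<S_0})$ cannot be discarded, which is precisely where the back of the wave re-enters the picture.

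The genuinely new ingredient is the selection of the possibly larger speed $2\sqrt{r\chi L}$, with $L:=\log(S_\init/S_0)$, by the trailing region. Here I would use the hypothesis $D=0$: setting $w(t,x):=\log\big(S_\init/S(t,x)\big)=k\int_0^t\rho(s,x)\,ds$, the chemical is slaved to the density, the model becomes a single equation for $\rho$ that is non-local in time, and the phenotypic threshold $\{S=S_0\}$ becomes the level set $\{w=L\}$. The interface $x_0(t)$ defined by $w(t,x_0(t))=L$ obeys the renewal-type relation $k\int_0^t\rho(s,x_0(t))\,ds=L$, which, combined with the constant-flux identity \eqref{eq:flux J}, the flux/growth balance \eqref{eq:flux neg}, and the oxygen budget $cL=\tfrac{k}{r}J$ coming from \eqref{eq:log S}, reproduces the discriminant constraint \eqref{eq:sign J}, equivalently $c^2\ge 4r\chi L$ as in \eqref{eq:add restr}. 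Making this heuristic rigorous is the heart of the matter: one must show that an interface advancing slower than $2\sqrt{r\chi L}$ forces the density just behind it to violate positivity along the Riccati-type equation \eqref{eq:ODE z-}, mirroring the phase-plane obstruction in the proof of Theorem~\ref{th:roxana}; alternatively, one builds a genuine subsolution supported in the bulk of the wave, as is customary for pushed fronts. The matching upper bound $c^\ast$ should then follow from pushed-wave arguments applied to the family of admissible traveling waves at all speeds $c\ge c^\ast$ constructed in Theorem~\ref{th:roxana}, by comparing the solution with the critical profile and its steep exponential tail.

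The main obstacle, and the reason the statement is only conjectural, is the lack of a comparison principle for the $(\rho,S)$ system: it is of activator/depleted-substrate type — a larger $S$ enhances growth, yet $\rho$ depletes $S$, and the aerotactic drift switches on only once $S$ falls below $S_0$ — so the dynamics is not order-preserving and the standard monotone-system toolbox does not apply directly. A plausible route is a monotonizing change of unknowns built on the $w$-formulation above (the dose $w$ is monotone in time), together with a renewal analysis of $x_0(t)$ and a careful treatment of the free boundary $\{S=S_0\}$ and of the discontinuous advection coefficient, whose very well-posedness is delicate, see \cite{demircigil_notitle_nodate}. A more robust but less explicit alternative would be to bypass traveling waves altogether and characterize the spreading speed directly through a variational, large-deviations formula, if one can give it meaning for this non-local, non-monotone problem.
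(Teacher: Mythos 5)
You should first be aware that the paper itself does not prove this statement: it is explicitly a conjecture (``we conjecture that the minimal speed \eqref{eq:roxana} \dots is selected''), supported only by numerical simulations of the Cauchy problem (the phase-plane comparison in Figure \ref{fig:roxana0}), and the conclusion lists the determinacy of the speed at the back of the wave as an open problem. So there is no paper proof to match your proposal against; the relevant question is whether your programme would close the gap, and you yourself candidly say it does not.

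As a roadmap, your proposal is faithful to the paper's heuristics: the two-sided spreading bounds, the F/KPP contribution $2\sqrt{rd}$ from the leading edge where $S>S_0$, the slaving $w=k\int_0^t\rho\,ds$ permitted by $D=0$, and the identification of $c^2\ge 4r\chi\log(S_\init/S_0)$ with the discriminant constraint \eqref{eq:sign J}--\eqref{eq:add restr} are exactly the ingredients behind Theorem \ref{th:roxana} and the paper's discussion of why the selection mechanism sits at the back. But the two steps that would constitute an actual proof are only named, not performed, and as stated at least one of them would fail: the Riccati equation \eqref{eq:ODE z-} and its discriminant obstruction are derived for exact stationary profiles in the moving frame, so ``an interface advancing slower than $2\sqrt{r\chi L}$ forces the density to violate positivity along \eqref{eq:ODE z-}'' is not a legitimate argument for a general solution of the Cauchy problem, whose density is positive by construction and does not satisfy that ODE; one would need a genuinely dynamic version of this obstruction (e.g.\ through the $w$-formulation and the free boundary $\{w=L\}$), which is precisely the missing idea. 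Likewise, the proposed upper bound by ``comparing the solution with the critical profile'' presupposes an order-preserving structure that, as you note yourself, the $(\rho,S)$ system with its depleted substrate and discontinuous advection does not possess; invoking pushed-front theory does not substitute for it. So the proposal is a reasonable and well-informed plan of attack, consistent with the paper, but it leaves the claim in the same conjectural state as the paper does, and its central reduction (from the Cauchy problem to the traveling-wave phase-plane constraint) is not justified.
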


This claim is supported by numerical exploration of the system in some range of parameters, see Figure \ref{fig:roxana0} for one typical set of paramaters. On the one hand, the claim is not surprising in the case of small bias, when $c = 2\sqrt{rd}$. In fact, this corresponds to the standard mechanism of speed determination at the edge of the front in reaction-diffusion equation with pulled waves. This was indeed confirmed in the previous model \eqref{eq:extended KS growth}--\eqref{eq:gogrow} \cite{cochet-escartin_hypoxia_2021,demircigil_notitle_nodate}. On the other hand, we emphasize that it does look surprising in the case of large bias, when $c = 2\sqrt{ r\chi \log \left ( \frac{S_\init}{S_0} \right )}$. In the latter case, the selection of the minimal speed would come from a discriminant condition {\em at the back of the wave}, which would be a quite original phenomenon, up to our knowledge.

\section{Conclusion and perspectives}

We exposed the original contribution of Keller and Segel devoted to chemotactic waves of bacteria, and discussed its limitations. These limitations are mainly concerned with the possible lack of positivity of the chemical concentration in the model. A pair of extensions were described. They both resolve the positivity issue, while keeping analytical solvability of the waves thanks to the specific choice of piecewise homogeneous models. In addition, they are both supported by biological experiments, respectively with bacteria {\em E. coli} and Dicty cells. \bigskip

To conclude, let us mention some open problems, either on the mathematical or on the modeling side. 

\subsubsection*{Determinacy of the speed at the back of the wave.} The result stated in Theorem \ref{th:roxana} appeared quite unexpectedly. If further numerical exploration with alternative schemes tends to confirm our Claim \ref{claim}, we believe that understanding the mechanism of speed selection is an interesting, and possibly original problem {\em per se}. We stress out that this mechanism occurs at $z = -\infty$, in the sense that the sign condition on the discriminant in \eqref{eq:ODE z-} ensures that  the cell density remains positive for negative $z$. Alternatively speaking, we face a situation which is the mirror of the standard mechanism of speed determinacy at $z = +\infty$ in the F/KPP equation. 

\subsubsection*{Traveling waves with non-zero chemical diffusion.} Figure \ref{fig:roxana1} shows the numerical simulation  of the Cauchy problem \eqref{eq:extended KS growth}--\eqref{eq:gogrowlog} with a chemical diffusion coefficient $D$ of order one. It seems that the solution converges towards a traveling wave profile as $t\to +\infty$ with reduced speed as compared to the case without chemical diffusion (Figure \ref{fig:roxana0}). Moreover, the numerical wave plotted in the  phase plane shows a similar pattern (compare Figure \ref{fig:roxana0}c and \ref{fig:roxana1}b), suggesting similar mechanisms occurring at $z=-\infty$ (in particular, a vanishing discriminant in the super-critical case $c> 2\sqrt{rd}$). However, since the relationship \eqref{eq:log S} is not satisfied with non-zero diffusion, we are lacking one equation to perform explicit computations. There exist multiple works extending the construction of waves for the original model \eqref{eq:KS} to the case of non-zero chemical diffusion. This may give some hints to address this question. 

\subsubsection*{Stability.}
Although stability in the Keller-Segel model \eqref{eq:KS} has drawn some attention, with a nearly complete picture by now, stability of the traveling wave solutions to the models presented in Sections \ref{sec:scenario 1} and \ref{sec:scenario 2} is almost entirely open.  The first author and Hoffmann proved local non-linear stability of standing waves for \eqref{eq:saragosti} (without the SGG signaling $S$), assuming that the attractant concentration $A$ is quasi-stationary (solving an elliptic equation at any time). They performed a change of coordinates to by-pass the discontinuity of the advection coefficient, and used higher-order energy methods to handle the singular term of the coupling.  

Nevertheless, numerical investigation performed at the occasion of this work, with simple finite volume, semi-implicit, upwind schemes, argue in favor of stability of all the waves described in \ref{sec:scenario 1} and \ref{sec:scenario 2}.

\subsubsection*{Spatial sorting.} Another open problem is the theoretical analysis of spatial sorting in bacteria collectives when the individuals have different chemotactic sensitivities. In \cite{fu_spatial_2018}, remarkable experiments on bacteria {\em E. coli}, together with a very elegant analytical argument, indicated that cells can move together despite their differences. The argument of \cite{fu_spatial_2018} goes as follows: assume that there exist multiple types of bacteria consuming a single nutrient $S$, and that each type is characterized by a chemotactic sensivity $\chi_i$  ;  suppose that, for each type, the chemotactic advection is of the form $\bchi_i\left (S,\frac{\partial S}{\partial x}\right ) = \chi_i \frac{\partial F(S)}{\partial x}$, say the logarithmic gradient as in the original model \eqref{eq:KS} ; suppose that the solution of each type converges towards a traveling  wave in the long-time, with a common speed $c$, so that the flux is asymptotically zero in the moving frame for each type: 
\begin{equation}\label{eq:Fu}
(\forall i)\quad - c - d  \frac{d}{d z}\left (  \log \rho_i\right ) +  \chi_i \frac{d }{d z} F(S) = 0\,.
\end{equation} 
Evaluating \eqref{eq:Fu} at  the maximum point of the density  $\rho_i$, say $z_i^*$, we would get that 
\begin{equation}\label{eq:Fu1}
c =  \chi_i \frac{d }{d z} F(S)(z_i^*)\,.
\end{equation}
Differentiating \eqref{eq:Fu} at $z = z^*_i$, it could be deduced that 
\begin{equation}\label{eq:Fu2}
\frac{d^2}{dz^2}F(S)(z_i^*) =  d  \frac{d^2 }{d z^2}\left ( \log \rho_i\right )(z_i^*) \leq 0 \,.
\end{equation}
The combination of \eqref{eq:Fu1} and \eqref{eq:Fu2} says that the peaks $(z^*_i)$ of the densities $(\rho_i)$ which are traveling together are restricted to the interval where $F(S)$ is concave. Moreover, they are ordered in such a way that $(\chi_i < \chi_j) \Rightarrow (z_i^* < z_j^*)$. This nice calculation indicates that different phenotypes could migrate collectively despite their differences. The intuitive reason, which can be read on \eqref{eq:Fu1}, is that larger chemosensitivity $\chi_i$ naturally pushes the cells ahead, where they experience shallower gradients. Nonetheless, the analysis in \cite{fu_spatial_2018} is not complete, as the existence of a stable traveling waves of different types with a common speed  is taken for granted. 

There exist previous theoretical works about collective migration of different phenotypes within the same chemical environment. We refer for instance to \cite{lin_development_2014}, which adopted the framework of the original model by Keller and Segel \eqref{eq:KS}. In view of the discussion above, the stability of their theoretical outcomes is questionable. In \cite{emako_traveling_2016}, the authors extend the framework of Section \ref{sec:scenario 1}, including two subpopulations with different chemotactic phenotypes. This work was supported by experimental data. However, the discussion in \cite{fu_spatial_2018} makes it clear that the framework of \cite{emako_traveling_2016} is not directly compatible with their findings. Actually, it is one consequence of the advection speed discontinuity in \eqref{eq:saragosti} that the maximum peak density is located at the sign transition, whatever the chemosensitivity coefficient is, hence violating the nice relationship \eqref{eq:Fu1}. 

Preliminary investigations suggest that the framework of Section  \ref{sec:scenario 2} cannot be readily extended as well. Indeed, signal-dependent growth counter-balances the fact that more efficient chemotactic types experience shallower gradients, because they have better access to nutrient. This triggers natural selection of the more efficient type by differential growth (results not shown). 

To our knowledge, there is no clear mathematical framework  to handle the remarkable experiments and biological insights as shown in \cite{fu_spatial_2018}, at the present time.  

\printbibliography

\end{document}